\documentclass[letterpaper, 10 pt, conference]{ieeeconf}  

\IEEEoverridecommandlockouts                              

\usepackage{amsmath}
\usepackage{amssymb}
\usepackage{graphicx}
\usepackage{subcaption}
\usepackage{xcolor}
\usepackage{algorithm}
\usepackage{url}
\usepackage{algpseudocode}
\usepackage{cite}

\newtheorem{definition}{Definition}
\newtheorem{theorem}{Theorem}
\newtheorem{lemma}{Lemma}

\newtheorem{proposition}{Proposition}

\title{Verification Framework for the
Union of Control Barrier Functions
}
\author{
Chuanrui Jiang and Andrew Clark
\thanks{Chuanrui Jiang and Andrew Clark are with the Electrical and Systems Engineering Department, McKelvey School of Engineering, Washington University in St. Louis, St. Louis, MO 63130. Email: \{chaunrui,andrewclark\}@wustl.edu}
}

\begin{document}
\maketitle

\begin{abstract}
Control Barrier Functions (CBFs) have been proposed to ensure safety of autonomous systems. This paper considers control policies that switch between CBF constraints. Under this approach, we represent a complex non-convex safe region as a union of sets that are computationally tractable to verify. We denote this framework as union-CBFs and make the following contributions. First, considering switching CBF-QP controllers, we propose a sufficient condition that ensures (i) the system undergoes a finite number of switches in any finite time interval and ensures (ii) the forward invariance of the closed-loop system in between switches. Second, we consider two types of switching strategies and propose union-CBFs conditions for each strategy to satisfy (i) and (ii). Third, we formulate Sum-of-Squares (SOS) algorithms to verify the conditions. The experiments show that our union-CBFs framework results in a larger safe region compared to high-degree polynomial CBFs. We also show the efficiency of the verification algorithms using a polynomial system model. Code: \url{https://github.com/Chuanruijiang/union_cbf_acc_2026.git}
\end{abstract}
\section{Introduction}
\label{sec:intro}
 
Safety is a critical property of control systems and is usually defined as the forward invariance of a set of safe states\cite{franco2008set}. Control Barrier Functions (CBFs) have been proposed to ensure safety \cite{ames2016control, ames2019control}. Mathematically, CBFs are state evaluation functions that output negative values for unsafe states and output positive values for some safe states. The safety of a system can be ensured by placing a CBF constraint on the control input so that the closed-loop system is forward invariant within the 0-super level set of the CBF.

In order to ensure the system perform needed tasks, it is usually desirable to maximize the volume of the safe region \cite{srinivasan2020synthesis, dai2023convex, xiao2023barriernet}. In many safe control scenarios, the set of safe states is non-convex  with irregular shapes \cite{glotfelter2017nonsmooth, molnar2023composing}. In such cases, the CBF could be a high-degree polynomial \cite{wang2024safe} or neural network \cite{jin2020neural, robey2020learning, dawson2023safe}, both of which require significant computational overhead to synthesize. Furthermore, verifying the safety properties of such complex CBFs is computationally prohibitive \cite{zhang2023exact}.

An alternative to expressing complex safety constraints with a single CBF is to describe the safe region as a union of simpler subsets, with each subset defined by a distinct CBF \cite{vahs2024non, glotfelter2020nonsmooth}. We call this idea \textit{union-CBFs}. 
One methodology \cite{molnar2023composing} composes these distinct CBFs to form a single CBF. The resulting composed CBF then ensures safety in a subset of the union of each CBF's safe region. Another non-smooth CBF methodology \cite{glotfelter2020nonsmooth, vahs2024non} partitions the union region into disjoint subsets. Safety is then ensured by switching between CBFs when system states enter different subsets. However, at states where subsets connect, the safe controller is required to satisfy multiple CBF constraints. This may cause numerical issues due to over constraining. The recent work \cite{long2025safety} proposes necessary and sufficient conditions for safety using switching CBFs, but assumes the uniqueness of trajectories of the closed-loop system. According to \cite{clark2021verification, mestres2025regularity}, formal CBF-based safety also requires non-trivial regularity properties for safe controllers (e.g. existence of a feasible control input \cite{clark2021verification}, and smoothness\cite{morris2015continuity} or local Lipschitz \cite{mestres2025regularity} properties). These regularity properties are not guaranteed for optimization-based controllers, such as CBF-Quadratic Programming\cite{ames2016control} (CBF-QP), and hence must be verified \cite{clark2021verification, zhang2023exact}.

In this paper, the first result is a union-CBFs framework with switching CBF-QP controllers. This framework overcomes the over constraining issue, and ensures (i) the forward invariance of the same union safe region as \cite{glotfelter2020nonsmooth, vahs2024non}, and (ii) the number of switches in any finite time interval is finite. The switching CBF-QP within this framework satisfies needed regularity properties. Based on the first result, we propose two types of switching strategies. For each strategy, we derive forward invariant conditions and develop Sum-Of-Squares-based verification algorithms to verify these conditions. In the experiments, we show that the proposed union-CBFs framework is (i) more efficient compared to high-degree polynomials and composed CBF and (ii) avoids the numerical issue brought by the over constraining. Efficiency of our proposed SOS verification methods are also analyzed using a nonlinear polynomial system example.

The rest of the paper is organized as follows. We introduce preliminaries in Section \ref{sec:preliminaries}. We propose sufficient conditions for forward invariance under union-CBFs in Section \ref{sec:fi_condition}. The union-CBFs conditions under two switching strategies are introduced in Section \ref{sec:strategies}. The SOS verification frameworks are derived in Section \ref{sec:verification}. Experiments are presented in Section \ref{sec:experiments}, and Section \ref{sec:conclusion} concludes the paper.

\textbf{Notations: } Throughout this paper, we use un-bold letters $x$ and $X$ to denote scalars, use bold lower case letters $\mathbf{x}$ to denote vectors, and use bold upper case letters $\mathbf{X}$ to present matrices. The bold $\mathbf{0}$ denotes a vector of all zeros. For a vector $\mathbf{x}$, $\mathbf{x}_{(i)}$ denotes the $i$-th element. $\mathbf{x}^2$ denotes element-wise squared vector of $\mathbf{x}$ such that $(\mathbf{x}^2)_{(i)} = (\mathbf{x}_{(i)})^2, i=1,\dots,n$. For a vector $\mathbf{x}$ and a scalar constant $\epsilon$, $\mathbf{y}=\mathbf{x}-\epsilon$ means $\mathbf{y}_{(i)}=\mathbf{x}_{(i)}-\epsilon$ for all $i=1,\dots,n$. For two vectors $\mathbf{x},\mathbf{y}\in\mathbb{R}^{n}$, $\mathbf{x}\leq\mathbf{y}$ denotes that $\mathbf{x}_{(i)}\leq\mathbf{y}_{(i)}$ for all $i=1,...,n$. For a matrix $\mathbf{X}$, $\mathbf{X}\succ 0$ means $\mathbf{X}$ is positive definite. We also use upper case letters in Cali-graphic font $\mathcal{X}$ to present sets. For Lie derivatives, we let $L_\mathbf{f}h(\mathbf{x}) = \frac{\partial h(\mathbf{x})}{\partial \mathbf{x}}\mathbf{f}(\mathbf{x})$. 
\section{Preliminaries}
\label{sec:preliminaries}

In this section, we first introduce the system model. Then, we introduce the concepts of Control Barrier Functions (CBFs). Finally, we introduce a Sum-Of-Squares method that verifies the emptiness of a set.
\subsection{System Model}
We consider a control affine system defined as
\begin{equation}
\label{eq:sys_dyn_control_aff}
    \Dot{\mathbf{x}}(t) = \mathbf{f}(\mathbf{x}(t)) + \mathbf{G}(\mathbf{x}(t))\mathbf{u}(t)
\end{equation}
where the state $\mathbf{x}(t)\in\mathcal{X}\subseteq\mathbb{R}^{n_x}$, the control input $\mathbf{u}(t)\in\mathcal{U}\subseteq\mathbb{R}^{n_u}$, $\mathcal{X}$ denotes the state space, and $\mathcal{U}$ denotes the admissible control set. In this paper, we assume that both $\mathbf{f}(\mathbf{x})$ and $\mathbf{G}(\mathbf{x})$ are polynomials of $\mathbf{x}$. We also assume that the admissible control set is $\mathcal{U}=\{\mathbf{u}|\mathbf{A}\mathbf{u}\leq\mathbf{c}\}$, where $\mathbf{A}$ and $\mathbf{c}$ are constant matrix and vector such that $\mathcal{U}$ is compact. The state space $\mathcal{X}$ has a safe region $\mathcal{X}_s$ containing all the safe states, and an unsafe region $\mathcal{X}_u = \mathcal{X}\setminus\mathcal{X}_s$ containing all the unsafe states. The system \eqref{eq:sys_dyn_control_aff} is safe if there exists a controller and a subset $\mathcal{C}\subseteq\mathcal{X}_s$ such that, $\forall \mathbf{x}(0) \in \mathcal{C}$, the closed loop system satisfies $\mathbf{x}(t)\in\mathcal{C}, \forall t\geq 0$, namely, system \eqref{eq:sys_dyn_control_aff} is controlled forward invariant with respect to $\mathcal{C}$.

\subsection{Control Barrier Functions}
In order to ensure safety, we define a function $h:\mathcal{X}\rightarrow\mathbb{R}$. We let $\mathcal{C}$ be its 0-super-level set $\{\mathbf{x}|h(\mathbf{x})\geq 0\}$ so that the safety can be certified by control barrier functions.
\begin{definition}[CBF\cite{ames2019control}]
\label{def:cbf}
A continuously differentiable function $h(\mathbf{x})$ is a CBF for system \eqref{eq:sys_dyn_control_aff} if there exists an extended class-$\kappa$ function $\kappa_h( )$ such that for all $\mathbf{x}\in\mathcal{C}$:
\begin{equation}
\label{eq:def_cbf}
    \sup_{\mathbf{u}\in \mathcal{U}}\{L_{\mathbf{f}}h(\mathbf{x})+L_{\mathbf{G}}h(\mathbf{x})\mathbf{u}+\kappa(h(\mathbf{x}))\}\geq 0.
\end{equation}
\end{definition}
Note that $L_{\mathbf{f}}h(\mathbf{x})$ is a scalar, $L_{\mathbf{G}}h(\mathbf{x})$ is a 1-by-$n_u$ row vector, and $\kappa(.)$ is an extended class $\kappa$ function, which is monotonically increasing with $\kappa(0)=0$. A CBF is a minimal CBF \cite{konda2020characterizing} if the class-$\kappa$ function is also a minimal function\cite[Theorem 2]{konda2020characterizing}. In this paper, we consider linear extended class-$\kappa$ functions as the minimal function such that $\kappa(h(\mathbf{x}))=\kappa h(\mathbf{x})$ with $\kappa>0$. 

The safe controller $\mathbf{k}:\mathcal{X}\rightarrow\mathcal{U}$ considered in this paper is based on the CBF-Quadratic Program shown below
\begin{subequations}
\label{eq:CBF-QP}
\begin{align}
    \mathbf{k}(\mathbf{x}) = arg\min_{\mathbf{u}}& \frac{1}{2}\mathbf{u}^T\mathbf{H}(\mathbf{x})\mathbf{u} + \mathbf{c}^T(\mathbf{x})\mathbf{u}\\
    \text{s.t. }& L_{\mathbf{f}}h(\mathbf{x}) + L_{\mathbf{G}}h(\mathbf{x})\mathbf{u} \geq -\kappa h(\mathbf{x}) \label{eq:CBF-QP-Safety_const}\\
    & \mathbf{A}\mathbf{u} \leq \mathbf{c}
\end{align}
\end{subequations}
where $\mathbf{H}(\mathbf{x})\succ 0$, $\forall \mathbf{x}\in\mathcal{X}$. By \cite[Theorem 5.4]{mestres2025regularity}, the following theorem provides a regularity property for the CBF-QP above to ensure formal safety.
\begin{lemma}[\cite{mestres2025regularity}]
\label{thm:minimal_CBF+forward_inv}
    The closed loop system \eqref{eq:sys_dyn_control_aff} with controller $\mathbf{u}=\mathbf{k}(\mathbf{x})$ is forward invariant with respect to $\mathcal{C}$ if $h(\mathbf{x})$ is a minimal CBF and the controller $\mathbf{k}(\mathbf{x})$ is continuous. 
\end{lemma}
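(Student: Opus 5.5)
The plan is a Nagumo-type contradiction argument on the closed-loop vector field $\mathbf{F}(\mathbf{x}) := \mathbf{f}(\mathbf{x})+\mathbf{G}(\mathbf{x})\mathbf{k}(\mathbf{x})$, under the standing reading that $\mathbf{k}(\mathbf{x})$ is feasible, hence well defined, on $\mathcal{C}$ (the CBF condition \eqref{eq:def_cbf} with the compact $\mathcal{U}$ supplies a feasible point). Since $\mathbf{f},\mathbf{G}$ are polynomial and $\mathbf{k}$ is continuous, $\mathbf{F}$ is continuous. By Peano's theorem every initial condition $\mathbf{x}(0)\in\mathcal{C}$ has at least one (Carathéodory/classical) solution, but uniqueness is not available. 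The goal is therefore the strong statement: \emph{every} solution starting in $\mathcal{C}$ remains in $\mathcal{C}$. Feasibility of the constraint \eqref{eq:CBF-QP-Safety_const} gives the pointwise inequality
\begin{equation*}
\dot h(\mathbf{x}) = L_{\mathbf{F}}h(\mathbf{x}) \geq -\kappa h(\mathbf{x}) \quad \text{for all } \mathbf{x}\in\mathcal{C}.
\end{equation*}

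The key steps, in order, are as follows.
\begin{itemize}
\item \textbf{Setup.} Suppose a solution $\mathbf{x}(\cdot)$ with $h(\mathbf{x}(0))\geq 0$ satisfies $h(\mathbf{x}(t_1))<0$ for some $t_1$. Let $t_0=\sup\{t\le t_1: h(\mathbf{x}(t))\ge 0\}$. By continuity $h(\mathbf{x}(t_0))=0$ and $h<0$ on $(t_0,t_1]$.
\item \textbf{Extending the inequality.} The inequality above holds only on $\mathcal{C}$, whereas the trajectory on $(t_0,t_1]$ lies outside $\mathcal{C}$. So I need $L_{\mathbf{F}}h\ge -\kappa h - \text{(small)}$ near $\partial\mathcal{C}$. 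Here minimality enters. By the characterization in \cite[Theorem 2]{konda2020characterizing}, minimality of the linear class-$\kappa$ function means no smaller rate works. Equivalently, at points of $\partial\mathcal{C}$ where $L_{\mathbf{f}}h+\sup_{\mathbf{u}} L_{\mathbf{G}}h\,\mathbf{u}=0$, a strict transversality fails. Combined with continuity of $\mathbf{x}\mapsto L_{\mathbf{F}}h(\mathbf{x})$, this yields, on a neighborhood of $\mathbf{x}(t_0)$ and for $t\in[t_0,t_0+\delta]$, $\frac{d}{dt}h(\mathbf{x}(t))\ge -\kappa h(\mathbf{x}(t)) - \omega(t-t_0)$ with $\omega$ a modulus of continuity.
\item \textbf{Comparison.} Because $\mathbf{k}$ is only continuous, not Lipschitz, I would not apply the comparison lemma to a Lipschitz right-hand side. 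Instead I argue via the upper-right Dini derivative. If $h(\mathbf{x}(t_0))=0$ and $L_{\mathbf{F}}h(\mathbf{x}(t_0))>0$, then $h$ strictly increases immediately after $t_0$, a contradiction. The delicate case is $L_{\mathbf{F}}h(\mathbf{x}(t_0))=0$. For it I would use Nagumo's theorem for continuous (non-unique) vector fields in the form requiring $\mathbf{F}(\mathbf{x})\in T_{\mathcal{C}}(\mathbf{x})$ on $\partial\mathcal{C}$ together with a strengthened tangency condition that guarantees invariance of all solutions. This is exactly where \cite[Theorem 5.4]{mestres2025regularity} places the minimality hypothesis.
\end{itemize}

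The main obstacle is the non-uniqueness issue in the last step. Plain Nagumo gives only existence of a viable solution, whereas the lemma claims forward invariance for all solutions. I would first try to show that minimality forces $\nabla h\neq 0$ on $\partial\mathcal{C}$ wherever $\dot h$ can vanish. That makes $\mathcal{C}$ locally a regular sublevel set, so that $\mathbf{F}$ together with continuity of $\mathbf{k}$ admits a uniform estimate $\dot h\ge -\kappa h - o(|h|)$ in a collar around $\partial\mathcal{C}$. Gronwall on the collar then gives $h(\mathbf{x}(t))\ge 0$, contradicting the setup. Failing that, I would invoke \cite[Theorem 5.4]{mestres2025regularity} directly, checking that its hypotheses (continuity of the CBF-QP solution map and minimality of $\kappa$) are exactly those assumed here.
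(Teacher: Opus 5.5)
\paragraph{Assessment.} The paper gives no proof of this lemma; it imports it from \cite[Theorem 5.4]{mestres2025regularity}, so your fallback of citing that theorem is the paper's route. Your self-contained argument, however, has a genuine gap at the step you call \emph{extending the inequality}.

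\paragraph{Where the argument fails.} The facts you actually use there are (a) $L_{\mathbf{F}}h\ge-\kappa h$ on $\mathcal{C}$, (b) continuity of $\mathbf{F}$, (c) minimality of the linear rate, and possibly $\nabla h\neq 0$ on $\partial\mathcal{C}$. These cannot suffice. Take $\dot x=u$ on $\mathbb{R}$, $\mathcal{U}=[-1,1]$, $h(x)=x$. Let $k(x)=0$ for $x\ge 0$, which is the CBF-QP solution with cost $\tfrac12u^2$ on $\mathcal{C}$, and let $k(x)=-\min\{1,\sqrt{-x}\}$ for $x<0$. Then $k$ is continuous, (a)--(c) hold, $\nabla h\equiv 1$, and Slater's condition holds on $\mathcal{C}$. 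Yet $x(t)=-t^2/4$, $t\in[0,2]$, is a solution from $x(0)=0$ that leaves $\mathcal{C}$. This trajectory satisfies your bound $\frac{d}{dt}h\ge-\kappa h-\omega(t-t_0)$ (with $\omega(s)=s$, for $s\le 2/\kappa$), so that bound yields no contradiction. It also violates your collar estimate $\dot h\ge-\kappa h-o(|h|)$ even though $\nabla h\neq 0$. The reason is that continuity of $\mathbf{k}$ only controls errors by a modulus of $\|\mathbf{x}-\mathbf{x}(t_0)\|$, never by $o(|h|)$. Minimality is a property of the scalar comparison function, automatic for linear $\kappa$. It therefore cannot produce transversality of $\mathbf{F}$ or nondegeneracy of $\nabla h$ on $\partial\mathcal{C}$.

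\paragraph{The missing idea.} The hypothesis must be used \emph{off} $\mathcal{C}$, and the QP itself supplies this. Wherever $\mathbf{k}(\mathbf{x})$ is the solution of \eqref{eq:CBF-QP}, which is what the lemma's $\mathbf{k}$ is and what the example above violates off $\mathcal{C}$, it satisfies \eqref{eq:CBF-QP-Safety_const}. Under the Slater condition the paper pairs with this lemma, a strictly feasible $\mathbf{u}$ at $\mathbf{x}_0\in\mathcal{C}$ remains feasible for all $\mathbf{x}$ near $\mathbf{x}_0$, by continuity of $h$, $L_{\mathbf{f}}h$ and $L_{\mathbf{G}}h$. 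Hence the QP is feasible, and $L_{\mathbf{F}}h\ge-\kappa h$ holds, on an open set $\mathcal{D}\supseteq\mathcal{C}$.

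\paragraph{How your setup then closes.} By continuity of $\mathbf{x}(\cdot)$ there is $\delta>0$ with $\mathbf{x}(t)\in\mathcal{D}\setminus\mathcal{C}$ for $t\in(t_0,\min\{t_1,t_0+\delta\})$. On this interval $\frac{d}{dt}h(\mathbf{x}(t))\ge-\kappa h(\mathbf{x}(t))>0$. Integrating from $t_0$ gives $h(\mathbf{x}(t))>h(\mathbf{x}(t_0))=0$, contradicting $h<0$ on $(t_0,t_1]$. This works for every Peano solution, so non-uniqueness is harmless. No tangent-cone version of Nagumo, no condition on $\nabla h$, and no special treatment of the case $L_{\mathbf{F}}h(\mathbf{x}(t_0))=0$ is needed.
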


By \cite{mestres2025regularity}, the controller $\mathbf{k}(\mathbf{x})$ is point-Lipschitz and thus continuous if the following Slater's Condition holds: $\forall \mathbf{x}\in\mathcal{C}$, $\exists\mathbf{u}$ such that $\mathbf{A}\mathbf{u}<\mathbf{c}$ and the inequality constraint \eqref{eq:CBF-QP-Safety_const} strictly holds.

\subsection{Algebraic Background}
In this subsection, we first introduce Farkas' lemma, which gives a necessary and sufficient condition for the existence of solutions to linear constraints. Then, we introduce a Sum-Of-Squares (SOS) method from \cite{dai2023convex} and \cite{dai2024verification} to verify emptiness of a set. 
\begin{lemma}[Farkas' Lemma \cite{matousek2006understanding}]
\label{lemma:farkas}
    Let $\boldsymbol{\Lambda} \in \mathbb{R}^{m\times n}$ be a matrix and $\boldsymbol{\xi} \in \mathbb{R}^{m}$ be a vector. $\exists \mathbf{x}$ with  $\boldsymbol{\Lambda} \mathbf{x}\leq\boldsymbol{\xi}$ if and only if the set $\{\mathbf{z}|\mathbf{z}\geq\mathbf{0},  \boldsymbol{\Lambda}^T\mathbf{z} = \mathbf{0}, \boldsymbol{\xi}^T\mathbf{z}=-1\}$ is empty.
\end{lemma}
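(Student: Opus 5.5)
This is Farkas' lemma in the form: the system $\boldsymbol{\Lambda}\mathbf{x}\leq\boldsymbol{\xi}$ is feasible if and only if the set $\mathcal{Z}=\{\mathbf{z}\mid\mathbf{z}\geq\mathbf{0},\ \boldsymbol{\Lambda}^T\mathbf{z}=\mathbf{0},\ \boldsymbol{\xi}^T\mathbf{z}=-1\}$ is empty. The plan is to prove the two directions separately: the ``only if'' part by a one-line weak-duality computation, and the ``if'' part by a separating-hyperplane argument.

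\emph{Necessity.} Suppose $\boldsymbol{\Lambda}\mathbf{x}\leq\boldsymbol{\xi}$ for some $\mathbf{x}$, and suppose for contradiction that some $\mathbf{z}\in\mathcal{Z}$ exists. Multiplying the inequality on the left by $\mathbf{z}^T\geq\mathbf{0}$ preserves it, so
\[
0=(\boldsymbol{\Lambda}^T\mathbf{z})^T\mathbf{x}=\mathbf{z}^T\boldsymbol{\Lambda}\mathbf{x}\leq\mathbf{z}^T\boldsymbol{\xi}=-1,
\]
a contradiction. Hence $\mathcal{Z}$ is empty.

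\emph{Sufficiency, by contrapositive.} Assume $\boldsymbol{\Lambda}\mathbf{x}\leq\boldsymbol{\xi}$ has no solution; we construct a point of $\mathcal{Z}$. Introduce slack variables and define the set
\[
K=\{\boldsymbol{\Lambda}\mathbf{x}+\mathbf{s}\mid \mathbf{x}\in\mathbb{R}^n,\ \mathbf{s}\geq\mathbf{0}\}.
\]
This is a finitely generated convex cone, generated by the columns $\pm$ of $\boldsymbol{\Lambda}$ and the unit vectors $\mathbf{e}_i$. Infeasibility means exactly that $\boldsymbol{\xi}\notin K$.

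The step I expect to be the main obstacle is showing that $K$ is closed. The cleanest route is Carathéodory's theorem for cones: every element of $K$ is a nonnegative combination of a linearly independent subset of the generators. Each such sub-cone is the linear image of an orthant under an injective map, hence closed. Since there are finitely many such subsets, $K$ is a finite union of closed sets and is therefore closed.

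With closedness in hand, strict separation of the point $\boldsymbol{\xi}$ from the closed convex cone $K$ gives a vector $\mathbf{w}$ with $\mathbf{w}^T\boldsymbol{\xi}<\mathbf{w}^T\mathbf{k}$ for all $\mathbf{k}\in K$. We then extract the needed properties of $\mathbf{w}$:
\begin{enumerate}
\item Since $K$ is a cone containing $\mathbf{0}$, the infimum of $\mathbf{w}^T\mathbf{k}$ over $K$ must equal $0$, which gives $\mathbf{w}^T\boldsymbol{\xi}<0$.
\item Taking $\mathbf{k}=\pm\boldsymbol{\Lambda}\mathbf{x}$ for arbitrary $\mathbf{x}$ forces $\boldsymbol{\Lambda}^T\mathbf{w}=\mathbf{0}$.
\item Taking $\mathbf{k}=t\mathbf{e}_i$ with $t\to\infty$ forces $\mathbf{w}\geq\mathbf{0}$.
\end{enumerate}
Finally, rescale by setting $\mathbf{z}=\mathbf{w}/(-\mathbf{w}^T\boldsymbol{\xi})$. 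This $\mathbf{z}$ satisfies $\mathbf{z}\geq\mathbf{0}$, $\boldsymbol{\Lambda}^T\mathbf{z}=\mathbf{0}$ and $\boldsymbol{\xi}^T\mathbf{z}=-1$, so $\mathcal{Z}$ is nonempty, which completes the contrapositive.

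As an alternative to the separation argument, one could run Fourier–Motzkin elimination on $\mathbf{x}$. Each elimination step produces a nonnegative combination of the original rows. If the system is infeasible, elimination ends at an inequality $0\leq\beta$ with $\beta<0$, and the accumulated multipliers, rescaled, give the required $\mathbf{z}$.
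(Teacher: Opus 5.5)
Your proof is correct. Necessity is the weak-duality computation, and for sufficiency the cone $K=\{\boldsymbol{\Lambda}\mathbf{x}+\mathbf{s}\mid\mathbf{s}\geq\mathbf{0}\}$, its closedness via conic Carath\'eodory, the strict separation, and the rescaling $\mathbf{z}=\mathbf{w}/(-\mathbf{w}^T\boldsymbol{\xi})$ all go through. The paper gives no proof of its own; it cites Matou\v{s}ek--G\"artner, whose standard arguments are exactly your two routes (closed finitely generated cone plus separation, or Fourier--Motzkin elimination), with the cosmetic difference that you build the sign-split of $\mathbf{x}$ and the slacks directly into the cone's generators instead of deriving the inequality form from the nonnegative equality form.
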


In this paper, we use the equivalent condition:  $\boldsymbol{\Lambda} \mathbf{x}\leq\boldsymbol{\xi}$ has a solution $\mathbf{x}$ if and only if the set $\{\mathbf{y}|\boldsymbol{\Lambda}^T\mathbf{y}^2 = \mathbf{0}, \boldsymbol{\xi}^T\mathbf{y}^2=-1\}$ is empty.

Now we introduce the theorem in \cite{dai2024verification} that verifies the emptiness of a given set. Note that a polynomial $s(\mathbf{x})$ is an SOS if $s(\mathbf{x})$ can be decomposed as $s(\mathbf{x}) = \sum_{i=1}^{N}p^2_i(\mathbf{x})$ for a set of polynomials $p_1(\mathbf{x}),\dots,p_N(\mathbf{x})$. 
\begin{lemma}[\cite{dai2024verification}]
\label{lemma:empty_verif}
    For some given polynomials $\phi_{i}(\mathbf{x})$, $i=1,\dots,N$, and polynomials $\psi_{j}(\mathbf{x})$, $j=1,\dots,M$, we define the set $\mathcal{S}$ as follows
    \begin{align*}
        \mathcal{S} = \left\{
        \begin{array}{l|ll}
        \mathbf{x} \in \mathbb{R}^{n} &
        \begin{array}{ll}
        \phi_{i}(\mathbf{x})=0,\forall i=1,\ldots,N\\
        \psi_{j}(\mathbf{x}) \geq 0,\forall j=1,\ldots,M\\
        \end{array}
        \end{array}
        \right\}
    \end{align*}
    Set $\mathcal{S}$ is empty if the following SOS program is feasible
    \begin{align*}
        \text{Find }& s_1(\mathbf{x}),\dots,s_N(\mathbf{x}) \text{ and }q_1(\mathbf{x}),\dots,q_M(\mathbf{x})\\
        \text{s.t. }&
        -1-\sum_{i=1}^{N}s_i(\mathbf{x})\phi_i(\mathbf{x}) - \sum_{j=1}^{M}q_j(\mathbf{x})\psi_j(\mathbf{x}) \text{ is SOS}\\
        & s_1(\mathbf{x}),\dots,s_N(\mathbf{x}) \text{ are SOS}
    \end{align*}
\end{lemma}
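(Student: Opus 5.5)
The plan is to argue by contradiction using a Positivstellensatz-style certificate. Suppose the SOS program is feasible, so that there are SOS multipliers $s_1,\dots,s_N$ and polynomials $q_1,\dots,q_M$ with
\begin{equation*}
\sigma(\mathbf{x}) := -1-\sum_{i=1}^{N}s_i(\mathbf{x})\phi_i(\mathbf{x})-\sum_{j=1}^{M}q_j(\mathbf{x})\psi_j(\mathbf{x})
\end{equation*}
an SOS polynomial. Also suppose, for contradiction, that $\mathcal{S}$ contains some point $\bar{\mathbf{x}}$. The idea is to evaluate the identity at $\bar{\mathbf{x}}$ and obtain a sign contradiction.

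At $\bar{\mathbf{x}}$ we have $\phi_i(\bar{\mathbf{x}})=0$ for all $i$. Hence every term $s_i(\bar{\mathbf{x}})\phi_i(\bar{\mathbf{x}})$ vanishes, whatever the sign of $s_i$. The identity then reduces to
\begin{equation*}
\sigma(\bar{\mathbf{x}})+\sum_{j=1}^{M}q_j(\bar{\mathbf{x}})\psi_j(\bar{\mathbf{x}})=-1 .
\end{equation*}
An SOS polynomial is a sum of squares of real polynomials, so $\sigma(\bar{\mathbf{x}})\geq 0$. Since $\psi_j(\bar{\mathbf{x}})\geq 0$, each product $q_j(\bar{\mathbf{x}})\psi_j(\bar{\mathbf{x}})$ is nonnegative provided $q_j(\bar{\mathbf{x}})\geq 0$. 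The left-hand side would then be nonnegative while the right-hand side is $-1$, which is a contradiction. Therefore $\mathcal{S}=\emptyset$.

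The main obstacle is the sign of the multipliers. As stated, the program requires only the $s_i$ to be SOS, yet the $s_i$ multiply equality constraints, where no sign is needed. The $q_j$ multiply inequality constraints, and there nonnegativity is essential. With an arbitrary polynomial $q_j$ the argument fails: for example, $q_1=-c$ with $\psi_1\equiv 1$ gives a feasible certificate even though $\mathcal{S}=\mathbb{R}^n$ may be nonempty. I would therefore read the lemma, consistent with \cite{dai2024verification} and with the standard Positivstellensatz form, as requiring $q_1,\dots,q_M$ to be SOS, and hence pointwise nonnegative. The $s_i$ may then be arbitrary polynomials, and no condition on them is used in the argument. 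Under this reading the proof is the three-line evaluation above. I would state this interpretation explicitly, noting that requiring the $s_i$ to be SOS as well only restricts the feasible set and does not affect soundness.
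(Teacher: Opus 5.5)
Your proof is correct, and your diagnosis of the statement is the essential point. As printed, the SOS requirement sits on the equality multipliers $s_i$ while the inequality multipliers $q_j$ are free, and in that form the lemma is false. For example, take $N=0$, $M=1$, $\psi_1\equiv 1$, $q_1\equiv -2$: the expression equals $1$, which is SOS, yet $\mathcal{S}=\mathbb{R}^n$. (In your version of the counterexample you need the constant $c\geq 1$, so that $c-1$ is SOS.)

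The paper gives no proof of its own and defers to \cite{dai2024verification}. However, every place it applies the lemma uses exactly your corrected form: the program in Lemma~\ref{lemma:non_empty_subset_union}, program \eqref{eq:T3_verification_prog}, and program \eqref{eq:verify_theorem4}. In each, the inequalities $h_p\geq 0$ and $-h_{p'}\geq 0$ carry SOS multipliers and the equality constraints carry free polynomial multipliers. For that form, your pointwise evaluation at a hypothetical $\bar{\mathbf{x}}\in\mathcal{S}$ is the standard and complete argument.
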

where $q_1(\mathbf{x}),\dots,q_M(\mathbf{x})$ are polynomials.

Lemma \ref{lemma:empty_verif} provides a sufficient condition for a set to be empty. With this SOS method, the verification of an empty set can be transformed into an SOS program that can be solved by SOSTOOLS\cite{prajna2002introducing} or MOSEK\cite{aps2022mosek} solvers. 

\section{Forward Invariance Conditions}
\label{sec:fi_condition}

This section presents our first result of this paper. Let $h_1(\mathbf{x}),\dots,h_{N_h}(\mathbf{x})$ be a given collection of differentiable functions. We define set $\mathcal{P}=\{1,\dots,N_h\}$, and define $\mathcal{X}_p = \{\mathbf{x}|h_p(\mathbf{x})\geq 0\}$ for all $p\in\mathcal{P}$. Our goal is to keep the system \eqref{eq:sys_dyn_control_aff} controlled forward invariant with respect to the set $\mathcal{X}_c = \cup_{p=1}^{N_h} \mathcal{X}_p$. We first introduce the considered switching CBF-QP, then give sufficient conditions for controlled forward invariance under union-CBFs.  

\subsection{Switching CBF-QPs}
\label{sec:switching-cbf-qp}
To achieve controlled forward invariance, we let the control policy switch between controllers $\mathbf{k}_1(\mathbf{x}),...,\mathbf{k}_{N_h}(\mathbf{x})$, where each $\mathbf{k}_p(\mathbf{x}), p\in\mathcal{P}$ is computed as
\begin{subequations}
\label{eq:CBF-QP-subcbf}
\begin{align}
    \mathbf{k}_p(\mathbf{x}) = arg\min_{\mathbf{u}}& \frac{1}{2}\mathbf{u}^T\mathbf{H}(\mathbf{x})\mathbf{u} + \mathbf{c}^T(\mathbf{x})\mathbf{u}\\
    \text{s.t. }& L_{\mathbf{f}}h_p(\mathbf{x}) + L_{\mathbf{G}}h_p(\mathbf{x})\mathbf{u} \geq -\kappa h_p(\mathbf{x})\label{eq:CBF-QP-safety}\\
    & \mathbf{A}\mathbf{u}\leq\mathbf{c} \label{eq:CBF-QP-contollim}
\end{align}
\end{subequations}
where $\mathbf{H}(\mathbf{x})\succ 0$ for all $\mathbf{x}\in\mathcal{X}$. Let $\sigma:[0,\infty)\rightarrow\mathcal{P}$ be the switching policy. A switching CBF-QP controller is then denoted as $\mathbf{u}(t)=\mathbf{k}_{\sigma(t)}(\mathbf{x}(t))$. 
In this paper, we assume a compact region $\mathcal{R}\subseteq\mathcal{X}$ as a region of interest such that, when $\mathbf{x}(t)\in\mathcal{R}$, we use switching CBF-QP controller \eqref{eq:CBF-QP-subcbf} to keep system \eqref{eq:sys_dyn_control_aff} safe from $\mathcal{X}_u$. We also assume $\mathcal{R}$ is large enough so that when $\mathbf{x}(t)$ enters $\mathcal{R}$, it will stay in $\mathcal{R}$ for at least $T>0$ length of time. Next, we consider the control system \eqref{eq:sys_dyn_control_aff} with a switching CBF-QP controller and propose sufficient conditions that keep \eqref{eq:sys_dyn_control_aff} controlled forward invariant in $\mathcal{X}_c$.

\subsection{Sufficient Conditions For Union-CBFs}
\label{sec:general_sufficient_condition}
\begin{proposition}
\label{prop:fi_switch_control}
    Consider control affine model \eqref{eq:sys_dyn_control_aff} with switching CBF-QP controller $\mathbf{u}(t) = \mathbf{k}_{\sigma(t)}(\mathbf{x}(t))$ suppose that:
    {
    \begin{itemize}
        \item[\textbf{P1-1}] $\sigma(t)$ is right continuous, piecewise constant, and only has a finite number of switches for any finite time horizon $[0, T]$.
        \item[\textbf{P1-2}] $\forall t\geq 0$, $\sigma(t)$ satisfies $h_{\sigma(t)}(\mathbf{x}(t))\geq 0$, and $\exists \mathbf{u} \in Int(\mathcal{U})$ with 
        \begin{equation}
        \label{eq:strict_safety}
              L_{\mathbf{f}}h_{\sigma(t)}(\mathbf{x}(t)) + L_{\mathbf{G}}h_{\sigma(t)}(\mathbf{x}(t))\mathbf{u}>-\kappa h_{\sigma(t)}(\mathbf{x}(t))
        \end{equation}
        where $\kappa$ is a positive constant.
    \end{itemize}
    }
    Then, $\forall \mathbf{x}'\in\mathcal{X}_c$, $\mathbf{x}(0) = \mathbf{x}'$ implies $\mathbf{x}(t)\in\mathcal{X}_c, \forall t\geq 0$.
\end{proposition}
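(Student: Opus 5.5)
The plan is to reduce the switched system to a finite concatenation of non-switched CBF-QP closed loops and apply Lemma~\ref{thm:minimal_CBF+forward_inv} on each piece. Fix a finite horizon $[0,T']$ and an initial condition $\mathbf{x}(0)=\mathbf{x}'\in\mathcal{X}_c$. By \textbf{P1-1}, there are finitely many switching instants $0=t_0<t_1<\dots<t_K<t_{K+1}=T'$. Right continuity and piecewise constancy give $\sigma(t)\equiv p_k$ on each $[t_k,t_{k+1})$. The claim $\mathbf{x}(t)\in\mathcal{X}_c$ for all $t\ge0$ follows once it holds on every finite horizon.

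On a single interval $[t_k,t_{k+1})$, the closed loop is $\dot{\mathbf{x}}=\mathbf{f}(\mathbf{x})+\mathbf{G}(\mathbf{x})\mathbf{k}_{p_k}(\mathbf{x})$, with $\mathbf{k}_{p_k}$ given by \eqref{eq:CBF-QP-subcbf}. \textbf{P1-2} evaluated at $t=t_k$ gives $h_{p_k}(\mathbf{x}(t_k))\ge0$. It also gives a $\mathbf{u}\in Int(\mathcal{U})$, so $\mathbf{A}\mathbf{u}<\mathbf{c}$, that satisfies \eqref{eq:strict_safety} strictly. This is exactly the Slater condition quoted after Lemma~\ref{thm:minimal_CBF+forward_inv}. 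By \cite{mestres2025regularity}, $\mathbf{k}_{p_k}$ is then point-Lipschitz and hence continuous at the states visited. Moreover, $h_{p_k}$ satisfies \eqref{eq:def_cbf} with the linear (minimal) class-$\kappa$ function, since a feasible $\mathbf{u}$ exists. Lemma~\ref{thm:minimal_CBF+forward_inv} then yields $h_{p_k}(\mathbf{x}(t))\ge0$, i.e.\ $\mathbf{x}(t)\in\mathcal{X}_{p_k}\subseteq\mathcal{X}_c$, for $t\in[t_k,t_{k+1})$.

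The intervals are glued by continuity of trajectories. The solution $\mathbf{x}(\cdot)$ is absolutely continuous, because the vector field is bounded: $\mathcal{U}$ is compact and $\mathbf{f},\mathbf{G}$ are polynomial on the compact $\mathcal{R}$. So $\mathbf{x}(t_{k+1})=\lim_{t\uparrow t_{k+1}}\mathbf{x}(t)$ is well defined. \textbf{P1-2} at $t_{k+1}$ directly supplies $h_{p_{k+1}}(\mathbf{x}(t_{k+1}))\ge0$, so the next interval starts inside $\mathcal{X}_{p_{k+1}}$. Induction over the finitely many $k$ covers $[0,T']$. Finiteness of the switches is what prevents the interval endpoints from accumulating (Zeno behaviour), which would otherwise break this induction.

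The main obstacle is the regularity step. Lemma~\ref{thm:minimal_CBF+forward_inv} needs $h_{p_k}$ to be a CBF and $\mathbf{k}_{p_k}$ continuous on all of $\mathcal{X}_{p_k}$, but \textbf{P1-2} only guarantees the Slater condition along the actual trajectory. I would first handle this by applying the lemma locally: Slater's condition is an open condition in $\mathbf{x}$ (strict inequalities with continuous data), so it persists on a neighbourhood of the trajectory segment. Nagumo/comparison arguments are local, so the invariance conclusion $\dot h_{p_k}\ge-\kappa h_{p_k}$, hence $h_{p_k}(\mathbf{x}(t))\ge e^{-\kappa(t-t_k)}h_{p_k}(\mathbf{x}(t_k))\ge0$, still goes through. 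Alternatively, and more simply, note that \textbf{P1-2} directly gives $h_{\sigma(t)}(\mathbf{x}(t))\ge0$ for every $t$, which already places $\mathbf{x}(t)$ in $\mathcal{X}_c$. In that reading the role of the argument above is to show that such a trajectory (and switching signal) exists and is well defined over every finite horizon, and I would present both readings.
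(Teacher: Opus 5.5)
Your proposal is correct and takes essentially the same route as the paper. Like the paper, you split $[0,T]$ at the finitely many switching instants given by \textbf{P1-1}, and use \textbf{P1-2} at each switching time to obtain $h_{p_k}(\mathbf{x}(t_k))\ge 0$ and Slater's condition, which gives a point-Lipschitz CBF-QP with a minimal CBF. You then apply Lemma~\ref{thm:minimal_CBF+forward_inv} on each interval and glue the intervals by continuity of $\mathbf{x}(\cdot)$ and induction.

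Your extra remarks are also sound and go slightly beyond the paper. The paper applies Lemma~\ref{thm:minimal_CBF+forward_inv} without comment, even though \textbf{P1-2} only gives strict feasibility along the trajectory; your comparison bound $h_{p_k}(\mathbf{x}(t))\ge e^{-\kappa(t-t_k)}h_{p_k}(\mathbf{x}(t_k))$ covers that. The paper also does not note that the first clause of \textbf{P1-2} already gives $\mathbf{x}(t)\in\mathcal{X}_{\sigma(t)}\subseteq\mathcal{X}_c$ directly.
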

\begin{proof}
    For any $T>0$, we divide the time range $[0, T]$ into $[0, T]=\cup_{k=1}^{K+1}[t_{k-1}, t_k)$ where $t_0=0$, $t_{K+1}=T$, and $\sigma(t)$ is constant in time interval $[t_{k-1}, t_k)$, $\forall k=1,\dots,K+1$. Let $\mathcal{K}=\{1,...,K\}$. So that switches happen at $t_k, \forall k\in\mathcal{K}$. For all $t \in [t_{k-1}, t_k)$ where $k=0,\dots,K+1$, $\sigma(t)=\sigma(t_{k-1})\in\mathcal{P}$. According to P1-2, constraints \eqref{eq:CBF-QP-safety} and \eqref{eq:CBF-QP-contollim} always satisfy the Slater's condition when $\sigma(t)=p$. This means for each $k=0,\dots,K$ and for all $t\in[t_k, t_{k+1})$, the controller $\mathbf{k}_{\sigma(t)}(\mathbf{x})$ is point-Lipschitz. Hence, the closed-loop system \eqref{eq:sys_dyn_control_aff} with controller $\mathbf{u}=\mathbf{k}_{\sigma(t)}(\mathbf{x})$ is a switched system \cite[Part-III]{liberzon2003switching} whose dynamics is continuous with respect to $\mathbf{x}$ in between switches. This implies that the closed-loop trajectory $\mathbf{x}(t)$ is continuously differentiable in between switches. Hence, the closed-loop dynamics would be piecewise continuous with respect to $t$ for all $t\geq 0$. Hence, we have that the closed-loop trajectory $\mathbf{x}(t)$ is absolutely continuous for all $t\geq 0$ \cite[Chapter~1.2]{liberzon2003switching}.
    
    \textbf{During the time interval $[t_0, t_1)$: } By condition P1-1, $\forall t \in[t_0, t_1)$, $\sigma(t) = \sigma(t_0)$, and the controller for the system \eqref{eq:sys_dyn_control_aff} is $\mathbf{u}=\mathbf{k}_p(\mathbf{x})$ with $p\in\mathcal{P}$ such that $h_p(\mathbf{x}(t_0))\geq 0$. By condition 1-2, the constraints for controller $\mathbf{k}_p(\mathbf{x})$ satisfy Slater's Conditions. Also, the CBF $h_p(\mathbf{x})$ is a minimal CBF since \eqref{eq:strict_safety} requires a linear extended class-$\kappa$ function. Hence, by Theorem \ref{thm:minimal_CBF+forward_inv}, $\mathbf{x}(t_0)\in\mathcal{X}_c$ implies $\mathbf{x}(t)\in\mathcal{X}_p\subseteq\mathcal{X}_c$, $\forall t\in[t_0, t_1)$, where $p$ satisfies $h_p(\mathbf{x}(t_0))\geq 0$. 

    \textbf{During the time interval $[t_k, t_{k+1})$: } We assume that $\mathbf{x}(t)\in\mathcal{X}_c$ for all $t\in[t_{k-1}, t_{k})$. Since $\mathbf{x}(t)$ is absolutely continuous, then $\lim_{t\rightarrow t_{k}^+}\mathbf{x}(t) = \lim_{t\rightarrow t_{k}^-}\mathbf{x}(t)=\mathbf{x}(t_k)\in\mathcal{X}_c$. According to P1-1, $\sigma(t)=\sigma(t_k)=p'\in\mathcal{P}$ for all $t\in[t_{k}, t_{k+1})$, where $p'$ satisfies $h_{p'}(\mathbf{x}(t_k))\geq 0$. By P1-2, the control constraints for $\mathbf{k}_{p'}(\mathbf{x})$ also satisfy Slater's Condition, and $h_{p'}(\mathbf{x})$ is a minimal CBF. Hence, $\mathbf{x}(t)\in\mathcal{X}_{p'}\subseteq\mathcal{X}_c$ for all $t\in[t_{k}, t_{k+1})$ 

    By induction, $\mathbf{x}(t)\in\mathcal{X}_c$ for all $t\in[0, T]$, and for all $T>0$.
\end{proof}

The conditions in Proposition \ref{prop:fi_switch_control} not only depends on $h_1(\mathbf{x}),...,h_{N_h}(\mathbf{x})$, but also depends on the switching signal $\sigma(t)$. Hence, in the next section, in order to verify Proposition \ref{prop:fi_switch_control}, we first specify the switching strategy, then propose some conditions for $h_1(\mathbf{x}),\dots,h_{N_h}(\mathbf{x})$ such that the switching signal $\sigma(t)$ together with $h_1(\mathbf{x}),\dots, h_{N_h}(\mathbf{x})$ satisfy P1-1 to P1-2. 
\section{Forward Invariance Conditions Under Different Switching Strategies}
\label{sec:strategies}

In this section we provide two switching strategies and provide conditions for forward invariance under these switching strategies. All sufficient conditions proposed in this section depends only on $h_1(\mathbf{x})$, $\dots$, $h_{N_h}(\mathbf{x})$. 

Before we introduce the details of the switching strategies and the corresponding forward invariance conditions, we first provide a result that will be used during the following discussion of this paper.

\begin{theorem}
\label{thm:positive_switching_dist}
     Let $f: \mathbb{R}^{n_x}\rightarrow\mathbb{R}$ be a locally Lipschitz function with Lipschitz constant $L$ on a compact set $\mathcal{W}\subseteq\mathbb{R}^{n_x}$. Let $\eta > 0$ be a constant. For all $\mathbf{x}, \mathbf{x}'\in\mathcal{W}$, we have that 
     \begin{equation*}
         \inf \left\{ ||\mathbf{x}'-\mathbf{x}|| : f(\mathbf{x})\geq \eta, f(\mathbf{x}')\leq 0 \right\} > \frac{\eta}{L}
     \end{equation*}
\end{theorem}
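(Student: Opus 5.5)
The plan is to use the Lipschitz estimate on $\mathcal{W}$ directly, and to add a compactness argument only to handle the attainment of the infimum. I read ``locally Lipschitz with Lipschitz constant $L$ on $\mathcal{W}$'' as $|f(\mathbf{x})-f(\mathbf{x}')|\le L\|\mathbf{x}-\mathbf{x}'\|$ for all $\mathbf{x},\mathbf{x}'\in\mathcal{W}$. A locally Lipschitz function satisfies such a uniform bound on a compact set, so this reading costs nothing.

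First I dispose of the degenerate case. If either $\{\mathbf{x}\in\mathcal{W}: f(\mathbf{x})\ge\eta\}$ or $\{\mathbf{x}'\in\mathcal{W}: f(\mathbf{x}')\le 0\}$ is empty, the infimum is over the empty set. It then equals $+\infty$ and the claim is trivial.

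Otherwise, take any admissible pair, so $f(\mathbf{x})\ge\eta$ and $f(\mathbf{x}')\le 0$. Then
\begin{equation*}
\eta \le f(\mathbf{x})-f(\mathbf{x}') \le L\|\mathbf{x}-\mathbf{x}'\|,
\end{equation*}
so $\|\mathbf{x}-\mathbf{x}'\|\ge \eta/L$ for every pair. Taking the infimum gives $\inf\{\cdots\}\ge \eta/L>0$. Next, both sets are closed, being preimages of closed sets under the continuous $f$ and intersected with the closed set $\mathcal{W}$. Since $\mathcal{W}$ is compact, both sets are compact, and the distance function is continuous on their product. Hence the infimum is attained by some pair $(\mathbf{x}^\ast,\mathbf{x}'^\ast)$, and in particular it is a strictly positive minimum. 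Strict positivity is the property used later to obtain a minimum dwell time between switches.

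The main obstacle is the \emph{strict} inequality ``$>\eta/L$'', and I expect it cannot be proved as stated. For example, $f(\mathbf{x})=L\,\mathbf{x}_{(1)}$ on a box containing the points $\mathbf{0}$ and $(\eta/L)\mathbf{e}_1$ attains distance exactly $\eta/L$. What the Lipschitz argument yields is $\ge \eta/L$. My plan is therefore to prove the non-strict bound $\ge\eta/L$, together with attainment and hence strict positivity of the infimum, and to recover a strict version in one of two ways. The first is to replace $L$ by any $L'>L$, which gives $\inf\ge\eta/L>\eta/L'$. The second is to observe that equality at the minimizing pair would force $f(\mathbf{x}^\ast)=\eta$, $f(\mathbf{x}'^\ast)=0$ and equality in the Lipschitz bound along that pair. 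Either version suffices for the intended use, namely a positive lower bound on the distance a trajectory must travel between the level sets $\{f\ge\eta\}$ and $\{f\le 0\}$.
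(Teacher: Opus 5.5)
Your argument is the paper's: the chain $\eta\le|f(\mathbf{x})-f(\mathbf{x}')|\le L\|\mathbf{x}-\mathbf{x}'\|$ over all admissible pairs. You are also right that it only gives $\inf\ge\eta/L$, since the paper's proof likewise stops at ``$\eta/L$ is the lower bound'', your example $f(\mathbf{x})=L\,\mathbf{x}_{(1)}$ shows the strict ``$>$'' is false as stated, and only the non-strict bound is used later in the dwell-time estimates for Switching Strategies I and II. Two small remarks: the compactness/attainment step is unnecessary because strict positivity already follows from $\eta/L>0$, and your ``second way'' only describes the equality case without excluding it (your own example realizes it), so replacing $L$ by any $L'>L$ is the only genuinely strict version you give.
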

\begin{proof}
     For all $\mathbf{x},\mathbf{x}'\in\mathcal{W}$ such that $f(\mathbf{x})\geq \eta$ and $f(\mathbf{x}')\leq 0$, we have $\eta \leq |f(\mathbf{x}) - f(\mathbf{x}')|$. Now considering the locally lipschitz property of $f(\mathbf{x})$, we have
     \begin{align*}
         \eta \leq |f(\mathbf{x}) - f(\mathbf{x}')| \leq L||\mathbf{x}-\mathbf{x}'||
         \implies \eta \leq L||\mathbf{x}-\mathbf{x}'||
     \end{align*}
     Hence, $\frac{\eta}{L}$ is the lower bound for the distance from any $\mathbf{x}\in\mathcal{W}$ with $f(\mathbf{x})\geq\eta$ to any other $\mathbf{x}'\in\mathcal{W}$ with $f(\mathbf{x}')\leq0$.
\end{proof}

\subsection{Switching Strategy I}
\label{sec:switching_policy_i}
We give the first switching strategy as follows. Let $\eta_h > \eta_l >0$ be constants. The signal $\sigma(t)$ switches from $p\in\mathcal{P}$ to another $p'\in\mathcal{P}$ at time $t$ if the following conditions are satisfied.
\begin{itemize}
     \item[\textbf{S1-1}] $h_p(\mathbf{x}(t))\geq 0$ and $h_{p'}(\mathbf{x}(t))\geq 0$.
     \item[\textbf{S1-2}] At state $\mathbf{x}=\mathbf{x}(t)$,
     \begin{equation*}
         \sup_{\mathbf{u}\in Int(\mathcal{U})}\{L_\mathbf{f}h_p(\mathbf{x}) + L_{\mathbf{G}}h_p(\mathbf{x})\mathbf{u}+\kappa h_p(\mathbf{x})\}\leq \eta_l.
     \end{equation*}
     \item[\textbf{S1-3}] At state $\mathbf{x}=\mathbf{x}(t)$, $\exists \mathbf{u}\in Int(\mathcal{U})$ such that
     \begin{equation*}
         L_\mathbf{f}h_{p'}(\mathbf{x}) + L_{\mathbf{G}}h_{p'}(\mathbf{x})\mathbf{u}+\kappa h_{p'}(\mathbf{x}) \geq \eta_h.
     \end{equation*}
\end{itemize}
{The idea of this switching strategy is that the CBF-QP switches to another CBF constraint when the current CBF-QP's feasible control set is too small.} The following theorem gives conditions for forward invariance under this switching strategy.
 
\begin{theorem}
\label{thm:union-CBF-switching 1}
     Let $h_1(\mathbf{x})$, $\dots$, $h_{N_h}(\mathbf{x})$ be given differentiable functions with their first order derivatives being locally Lipschitz on $\mathcal{X}_c$. {Also let $\epsilon_{cbf}>0$ be a small constant and $\epsilon_{cbf}\leq \eta_l$.} If $\forall \mathbf{x}\in\mathcal{X}_c$, $\exists p\in\mathcal{P}, \mathbf{u}\in Int(\mathcal{U})$ with 
     \begin{itemize}
         \item[\textbf{C1-1}] $h_p(\mathbf{x})\geq 0$.
         \item[\textbf{C1-2}] $L_\mathbf{f}h_p(\mathbf{x}) + L_{\mathbf{G}}h_p(\mathbf{x})\mathbf{u}+\kappa h_p(\mathbf{x}) \geq \epsilon_{cbf}$.
     \end{itemize}
    Then, any switching signal $\sigma(t)$ following conditions S1-1 to S1-3 satisfies P1-1 and P1-2.
\end{theorem}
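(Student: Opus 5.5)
We prove P1-2 and P1-1 separately, both driven by the ``margin function'' $m_p(\mathbf{x}) := \sup_{\mathbf{u}\in Int(\mathcal{U})}\{L_\mathbf{f}h_p(\mathbf{x}) + L_{\mathbf{G}}h_p(\mathbf{x})\mathbf{u}+\kappa h_p(\mathbf{x})\}$. The strategy is to show that switching away from $p$ only happens when $m_p\le\eta_l$, while switching into $p'$ requires $m_{p'}\ge\eta_h>\eta_l$. The gap $\eta_h-\eta_l>0$ will then force a uniform positive dwell time, exactly as in Theorem \ref{thm:positive_switching_dist}.

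\textbf{P1-2.} We argue by induction over switching intervals. At $t=0$, conditions C1-1 and C1-2 provide $p$ with $h_p(\mathbf{x}(0))\ge 0$ and $m_p(\mathbf{x}(0))\ge\epsilon_{cbf}>0$; we take $\sigma(0)=p$. Now suppose $\sigma(t)=p$ on an interval. By Proposition \ref{prop:fi_switch_control}, or more precisely by its per-interval argument via Lemma \ref{thm:minimal_CBF+forward_inv}, $h_p(\mathbf{x}(t))\ge 0$ holds as long as strict feasibility of \eqref{eq:strict_safety} persists. We must show that $m_p(\mathbf{x}(t))>0$ throughout. The strategy does not switch while $m_p>\eta_l$. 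Once $m_p\le\eta_l$ is reached, we need a $p'$ satisfying S1-1 and S1-3. Here we take $\eta_h\le\epsilon_{cbf}$, interpreting C1-2 as guaranteeing some index with margin at least $\eta_h$; I expect the intended reading to require this, since the statement only gives $\epsilon_{cbf}\le\eta_l$, and this point is delicate. Under that reading, a switch occurs before the margin falls to $0$: continuity of $m_p$ along trajectories, established below, prevents it from jumping from $\eta_l$ down to $0$. Immediately after a switch, $m_{p'}\ge\eta_h>0$ and $h_{p'}\ge0$ by S1-1, so the inductive hypothesis is restored.

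\textbf{Continuity and Lipschitzness of $m_p$.} Since $\mathcal{U}$ is a compact polytope, $m_p(\mathbf{x})=L_\mathbf{f}h_p+\kappa h_p+\max_{\mathbf{u}\in\mathcal{U}}L_\mathbf{G}h_p\,\mathbf{u}$; the supremum over the interior equals the maximum over the closure. The last term is a support function of $\mathcal{U}$, which is Lipschitz in the row vector $L_\mathbf{G}h_p$ with constant $\max_{\mathbf{u}\in\mathcal{U}}\|\mathbf{u}\|$. Because the gradients of the $h_p$ are locally Lipschitz and $\mathbf{f},\mathbf{G}$ are polynomial, $m_p$ is Lipschitz with some constant $L_m$ on the compact set $\mathcal{R}\cap\mathcal{X}_c$.

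\textbf{P1-1.} Right-continuity and piecewise constancy hold by construction, because $\sigma$ holds its value until a switch instant. For finiteness, consider two consecutive switches at $t_k<t_{k+1}$ with $\sigma=p'$ on $[t_k,t_{k+1})$. Then $m_{p'}(\mathbf{x}(t_k))\ge\eta_h$ and $m_{p'}(\mathbf{x}(t_{k+1}))\le\eta_l$. Applying Theorem \ref{thm:positive_switching_dist} to $f=m_{p'}-\eta_l$ with $\eta=\eta_h-\eta_l$ gives $\|\mathbf{x}(t_{k+1})-\mathbf{x}(t_k)\|\ge(\eta_h-\eta_l)/L_m$. The closed-loop velocity is bounded by $M=\max_{\mathcal{R},\,\mathbf{u}\in\mathcal{U}}\|\mathbf{f}+\mathbf{G}\mathbf{u}\|<\infty$, since $\mathcal{R}$ and $\mathcal{U}$ are compact, and the trajectory is absolutely continuous. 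Hence $t_{k+1}-t_k\ge(\eta_h-\eta_l)/(L_mM)>0$, which is a uniform dwell time, so any finite horizon $[0,T]$ contains only finitely many switches.

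The main obstacle is the first part of the P1-2 argument: guaranteeing that a valid target $p'$ meeting the $\eta_h$ threshold exists exactly when $m_p$ drops to $\eta_l$. This step must be reconciled with the stated relation between $\epsilon_{cbf}$, $\eta_l$, and $\eta_h$, and if the thresholds do not line up it may need a sharper reading of C1-2.
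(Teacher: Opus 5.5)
Your P1-1 argument is the same as the paper's. A Lipschitz bound on the margin, Theorem~\ref{thm:positive_switching_dist} with $\eta=\eta_h-\eta_l$, and the velocity bound on $\mathcal{R}$ give a uniform dwell time after every switch. The only difference is that the paper never shows $m_p$ is Lipschitz. It freezes an admissible $\mathbf{u}_p$ that attains at least $\eta_h$ at $\mathbf{x}(t_k)$ and uses that this fixed-input expression is bounded above by $m_p$ (its inclusion $\mathcal{Y}\subseteq\mathcal{Y}_0$). Your support-function bound is equally valid, and it also gives the continuity of $m_p$ along trajectories that you need in P1-2.

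The real problem is P1-2, and your hesitation there is justified. With $\epsilon_{cbf}\le\eta_l<\eta_h$, as stated, P1-2 is false, so no sharper reading of C1-2 can close the step. Nothing stops the active index's margin from decaying through $\eta_l$ to $0$ while every other admissible index has margin only in $[\epsilon_{cbf},\eta_h)$, in which case S1-3 never fires.

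Here is a concrete example. Take $\dot x=1+u$, $\mathcal{U}=[-\tfrac12,\tfrac12]$, $\kappa=1$, $h_1(x)=1-x$, $h_2(x)=x$. Then $\mathcal{X}_c=\mathbb{R}$, $m_1(x)=\tfrac12-x$ and $m_2(x)=\tfrac32+x$. C1-1 and C1-2 hold with $\epsilon_{cbf}=\tfrac14$: use $p=2$, $u=0$ for $x\ge 0$, and $p=1$, $u=-\tfrac14$ for $x<0$. Set $\eta_l=1$, $\eta_h=10$, $x(0)=-5$, $\sigma(0)=1$.
\begin{itemize}
\item S1-3 for $p'=2$ requires $x>8.5$.
\item Every admissible input gives $\dot x\ge\tfrac12$, so the state reaches $x=\tfrac12$ in finite time with $\sigma=1$ still active and no switch ever triggered.
\item At $x=\tfrac12$ we have $m_1=0$, so strict feasibility fails, and for $x>\tfrac12$ the $h_1$-QP is infeasible.
\end{itemize}
The paper's proof of P1-2 is a one-line assertion that C1-1 and C1-2 imply it, which skips exactly this point. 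Your proposal therefore proves a corrected theorem in which $\epsilon_{cbf}\le\eta_l$ is replaced by $\epsilon_{cbf}\ge\eta_h$. That replacement is necessary, not a matter of interpretation.

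Under $\epsilon_{cbf}\ge\eta_h$ your argument is sound, but state three things explicitly.
\begin{itemize}
\item It uses the strategy in both directions. P1-1 uses ``only if'': leaving $p$ requires $m_p\le\eta_l$. P1-2 uses ``if'': the switch must fire at the first time $t^*$ with $m_p(\mathbf{x}(t^*))\le\eta_l$.
\item At $t^*$, S1-1 holds for the outgoing index because $h_p\ge 0$ up to $t^*$ by Lemma~\ref{thm:minimal_CBF+forward_inv} and continuity. The index supplied by C1 differs from $p$ because its margin is at least $\eta_h>\eta_l\ge m_p(\mathbf{x}(t^*))$.
\item The theorem quantifies over any switching signal, so you should not choose $\sigma(0)$ yourself. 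An arbitrary initial index with $m_{\sigma(0)}(\mathbf{x}(0))\le\eta_l$ simply triggers an immediate switch at $t=0$.
\end{itemize}
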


\begin{proof}
    We first prove P1-1 and then prove P1-2.

    \textit{\underline{Proof of P1-1: }} Assume $\sigma(t)$ switches to $p$ at time $t_k$, and the next switch of $\sigma(t)$ from $p$ to another index in $\mathcal{P}$ happens at $t_{k+1}$. In order to prove P1-1, we first prove that there exists a uniform lower bound for $||\mathbf{x}(t_{k+1}) - \mathbf{x}(t_k)||$, then we show that there exists a uniform lower bound on $t_{k+1}-t_{k}$ that holds for all $k=0,1,\ldots$. According to the problem setup, $\mathbf{f}(\mathbf{x})$ and $\mathbf{G}(\mathbf{x})$ of system \eqref{eq:sys_dyn_control_aff} are locally Lipschitz for all $\mathbf{x}\in\mathcal{X}$. 
    For all $p\in\mathcal{P}$, we let $f_p(\mathbf{x})=L_\mathbf{f}h_p(\mathbf{x}) + L_{\mathbf{G}}h_p(\mathbf{x})\mathbf{u}_p+\kappa h_p(\mathbf{x})$, where $\mathbf{u}_p\in\mathcal{U}$ is an admissible control such that $f_p(\mathbf{x}(t), \mathbf{u}_p)\geq \eta_h$. Since $\mathcal{U}$ is a compact set, we have $||\mathbf{u}_p||\leq U$ where $U=\max_{\mathbf{u}\in\mathcal{U}}||\mathbf{u}||$. We denote $L_{p}^{(G)}$ and $L_{p}^{(f)}$ be the Lipschitz constants of terms $(L_\mathbf{f}h_p(\mathbf{x})+\kappa h_p(\mathbf{x}))$ and $L_{\mathbf{G}}h_p(\mathbf{x})$ in the compact region $\mathcal{R}$, respectively. Hence, for all $\mathbf{x}\neq\mathbf{x}'$, we have:
    \begin{align*}
         \|f_p(\mathbf{x}) - f_p(\mathbf{x}')\|
         \leq (L_p^{(G)} + L_p^{(f)}U)\|\mathbf{x}-\mathbf{x}'\|
    \end{align*}
    We let $L_p = L_p^{(G)} + L_p^{(f)}U$ and $L=\max_{p\in\mathcal{P}}L_p$. 
    By Theorem \ref{thm:positive_switching_dist}, for any $\mathbf{x}'\in\mathcal{X}_p$ such that $f_{p}(\mathbf{x}')\leq \eta_l$, we have $||\mathbf{x}' - \mathbf{x}(t_k)||\geq \frac{\eta_h-\eta_l}{L}$. Define the following two sets: 
    \begin{align*}
        \mathcal{Y} &:= \{\mathbf{x}|\sup_{\mathbf{u}\in Int(\mathcal{U})}\{L_\mathbf{f}h_p(\mathbf{x}) + L_{\mathbf{G}}h_p(\mathbf{x})\mathbf{u}+\kappa h_p(\mathbf{x})\}\leq \eta_l\}\\
        \mathcal{Y}_0 &:= \{\mathbf{x}| L_\mathbf{f}h_p(\mathbf{x}) + L_{\mathbf{G}}h_p(\mathbf{x})\mathbf{u}_p+\kappa h_p(\mathbf{x}) \leq \eta_l\}
    \end{align*}
    where $\mathbf{u}_0\in Int(\mathcal{U})$. It is obvious that $\mathcal{Y}\subseteq\mathcal{Y}_0$. Since we have $\mathbf{x}(t_k)\notin\mathcal{Y}$, $\mathbf{x}(t_k)\notin\mathcal{Y}_0$, and $\mathbf{x}(t_{k+1})\in\mathcal{Y}$, we have that
    \begin{equation*}
        ||\mathbf{x}(t_{k+1})- \mathbf{x}(t_k)||\geq||\mathbf{x}'- \mathbf{x}(t_k)||\geq {\frac{\eta_h-\eta_l}{L}}
    \end{equation*}
    Hence, we have a uniform lower bound for the term $||\mathbf{x}(t_{k+1})- \mathbf{x}(t_k)||\geq \frac{\eta_h-\eta_l}{L}$. {Since we also assume that $\mathcal{R}$ and $\mathcal{U}$ are bounded sets, there exists a constant $D>0$ such that $||\mathbf{f}(\mathbf{x}(t)) + \mathbf{G}(\mathbf{x}(t))\mathbf{u}(t)|| \leq D$, $\forall t\geq 0$. Hence, we now have a uniform lower bound for $||t_{k+1}-t_k||\geq \frac{\eta_h-\eta_l}{DL}$ for all $k=0,1,...$.} 
    Condition P1-1 is now proved.

    \textit{\underline{Proof of P1-2: }} If conditions C1-1 and C1-2 are satisfied, then all possible switching signals under Switching Strategy 1 satisfy P1-2. Hence, P1-2 is proved.
    
\end{proof}

\subsection{Switching Strategy II}
\label{sec:switching_policy_ii}
Let $\eta_h, \eta_l, \tau$ be positive constants and $\eta_h > \eta_l$. We now consider the following switching strategy:
{
\begin{itemize}
    \item $\forall t\in[0, \tau)$, $\sigma(t) = p_0$ such that $h_{p0}(x(0))\geq 0$.
    \item $\sigma(t)$ switches from a $p\in\mathcal{P}$ to another $p'\in\mathcal{P}$ at time $t$ if $h_p(\mathbf{x}(t))\leq \eta_l$ and $h_{p'}(\mathbf{x}(t))\geq \eta_h$
\end{itemize}
}
Under the Switching Strategy II, we have the union CBF condition provided in the following theorem.
\begin{theorem}
\label{thm:union-CBF-switching 2}
    Let $h_1(\mathbf{x}),\dots,h_{N_h}(\mathbf{x})$ be a collection of given polynomials. {Let $\epsilon_{cbf}>0$ be a constant.} If $\forall p\in\mathcal{P}$ and $\forall \mathbf{x}\in\mathcal{X}_p$, $\exists \mathbf{u}\in Int(\mathcal{U})$ such that
    \begin{itemize}
        \item [\textbf{C2-1}] $L_\mathbf{f}h_p(\mathbf{x}) + L_{\mathbf{G}}h_p(\mathbf{x})\mathbf{u}+\kappa h_p(\mathbf{x}) \geq \epsilon_{cbf}$.
    \end{itemize}
    Then, any switching signal $\sigma(t)$ following the Switching Strategy 2 satisfies conditions P1-1 and P1-2.
\end{theorem}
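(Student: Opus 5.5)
We prove P1-1 and P1-2 separately, following the structure of the proof of Theorem \ref{thm:union-CBF-switching 1}; here the switching is triggered by the value of $h_p$ itself rather than by the CBF-QP margin.

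\emph{P1-2.} We first show by induction over the switching times that $h_{\sigma(t)}(\mathbf{x}(t))\geq 0$ for all $t\geq 0$.
\begin{itemize}
\item On the initial interval, $\sigma(t)=p_0$ with $h_{p_0}(\mathbf{x}(0))\geq 0$.
\item At any switching time $t_k$, Switching Strategy II requires $h_{p'}(\mathbf{x}(t_k))\geq \eta_h>0$, so the new active index starts inside $\mathcal{X}_{p'}$.
\item Between switches $\sigma$ is constant, say $\sigma=p$. Condition C2-1 holds on all of $\mathcal{X}_p$ with $\epsilon_{cbf}>0$, which is Slater's condition for the constraints \eqref{eq:CBF-QP-safety}--\eqref{eq:CBF-QP-contollim}. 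Hence $\mathbf{k}_p$ is point-Lipschitz, and therefore continuous. By Lemma \ref{thm:minimal_CBF+forward_inv}, with a linear class-$\kappa$ function so that $h_p$ is a minimal CBF, $\mathcal{X}_p$ is forward invariant under $\mathbf{k}_p$.
\end{itemize}
So $h_{\sigma(t)}(\mathbf{x}(t))\geq 0$ on each interval. At any such state $\mathbf{x}(t)\in\mathcal{X}_{\sigma(t)}$, C2-1 then supplies $\mathbf{u}\in Int(\mathcal{U})$ satisfying \eqref{eq:strict_safety}, since $\epsilon_{cbf}>0$ gives the strict inequality. This is exactly P1-2.

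\emph{P1-1.} Piecewise constancy and right continuity hold by construction. The task is to show a uniform positive dwell time.
\begin{itemize}
\item Suppose $\sigma$ switches to $p$ at $t_k$ and next switches away at $t_{k+1}$. Then $h_p(\mathbf{x}(t_k))\geq\eta_h$ and $h_p(\mathbf{x}(t_{k+1}))\leq\eta_l$.
\item The $h_p$ are polynomials, so they are Lipschitz on the compact set $\mathcal{R}$. Let $L=\max_{p}L_p$ be a common constant.
\item Apply Theorem \ref{thm:positive_switching_dist} to $f=h_p-\eta_l$ with $\eta=\eta_h-\eta_l$. This gives $\|\mathbf{x}(t_{k+1})-\mathbf{x}(t_k)\|\geq(\eta_h-\eta_l)/L$.
\item Since $\mathcal{R}$ and $\mathcal{U}$ are bounded, $\|\mathbf{f}(\mathbf{x})+\mathbf{G}(\mathbf{x})\mathbf{u}\|\leq D$, as in the proof of Theorem \ref{thm:union-CBF-switching 1}. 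Because $\mathbf{x}(t)$ is absolutely continuous, $\|\mathbf{x}(t_{k+1})-\mathbf{x}(t_k)\|\leq D(t_{k+1}-t_k)$.
\item Combining the two bounds gives $t_{k+1}-t_k\geq(\eta_h-\eta_l)/(DL)$ for every $k\geq 1$.
\end{itemize}
The first interval has length at least $\tau>0$ by the strategy's definition, so the global dwell time is at least $\min\{\tau,(\eta_h-\eta_l)/(DL)\}$. Consequently any finite horizon $[0,T]$ contains finitely many switches.

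\emph{Main obstacle.} The delicate point is that the Lipschitz and velocity bounds must be applied along the trajectory segment, so the segment has to stay in $\mathcal{R}$. This relies on the standing assumption that trajectories remain in $\mathcal{R}$, together with the forward invariance of $\mathcal{X}_c$ from Proposition \ref{prop:fi_switch_control}. A second subtlety is that the two parts are coupled: the Lipschitz continuity of the vector field between switches, needed for P1-1, itself comes from the Slater condition established in P1-2. To avoid circularity, run the two arguments jointly inside a single induction over switching intervals.
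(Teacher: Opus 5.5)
Your proposal is correct and follows essentially the same route as the paper: C2-1 with $\epsilon_{cbf}>0$ gives the strict inequality needed for P1-2, and the dwell time $(\eta_h-\eta_l)/(DL)$ comes from Theorem~\ref{thm:positive_switching_dist} applied to $h_p$ between the thresholds $\eta_h$ and $\eta_l$, combined with the velocity bound $D$ on $\mathcal{R}$. If anything you are more careful than the paper: you avoid its case split (a)/(b), you explicitly derive $h_{\sigma(t)}(\mathbf{x}(t))\geq 0$ from forward invariance of $\mathcal{X}_p$ between switches (the paper simply asserts P1-2 from C2-1), and you account for the initial interval of length $\tau$.
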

\begin{proof}
    We prove Theorem \ref{thm:union-CBF-switching 2} by considering the following two cases.
    \begin{itemize}
        \item [(a)]$\forall \mathbf{x}\in\mathcal{X}_c$, $\nexists p, p'\in\mathcal{P}$ such that $p\neq p'$, $h_p(\mathbf{x})\leq \eta_l$, and $h_{p'}(\mathbf{x})\geq \eta_h$.
        \item [(b)]$\exists \mathbf{x}\in\mathcal{X}_c$ such that $h_p(\mathbf{x})\leq \eta_l$ and $h_{p'}(\mathbf{x})\geq \eta_h$ for some $p, p'\in\mathcal{P}$ and $p \neq p'$. 
    \end{itemize}
    
    \textit{\underline{Proof of P1-1: }}
    According to condition C2-1, in case (a), once the switching signal $\sigma(t)$ is initialized as $p_0$ such that $h_{p_0}(\mathbf{x}(0))\geq0$, the closed-loop system \eqref{eq:sys_dyn_control_aff} with CBF-QP controller $\mathbf{u}_{p_0}(\mathbf{x})$ is forward invariant to set $\{\mathbf{x}|h_{p_0}(\mathbf{x})\geq 0\}\subseteq\mathcal{X}_c$. Hence, the switching signal keeps its value $\sigma(t)=p_0$ for all $t\geq 0$. 
    
    In case (b), the switching signal $\sigma(t)$ may switch at some $t > \tau$. In this case, the positive constant $\tau$ ensures that $\sigma(t)$ does not have a switch in an arbitrary small time range after the initialization $\sigma(0)=p_0$. After the time $\tau$, we let $t_{k} >\tau$ be the time slot when $\sigma(t)$ switches to $p\in\mathcal{P}$ such that $h_p(\mathbf{x}(t_k))\geq\eta_h$. We also denote $t_{k+1}$ as the time when $h_p(\mathbf{x}(t_{k+1}))\leq \eta_l$ and $\sigma(t)$ switches away from $p$. Since, $\forall p\in\mathcal{P}$, $h_p(\mathbf{x})$ is a polynomial and is locally Lipschitz on $\mathcal{X}$. Let the Lipschitz constant be $L_p$, and let $L_h = \max_{p\in\mathcal{P}}L_p$. By Theorem \ref{thm:positive_switching_dist}, we always have that $||\mathbf{x}(t_{k+1}) - \mathbf{x}(t_k)|| \geq \frac{\eta_h - \eta_l}{L_p}$. Since we assumed that $\mathcal{R}$ and $\mathcal{U}$ are bounded sets, then there exists constant $D>0$ such that $||\mathbf{f}(\mathbf{x}(t)) + \mathbf{G}(\mathbf{x}(t))\mathbf{u}(t)|| \leq D$, $\forall t\geq 0$. Hence, we now have a uniform lower bound for $||t_{k+1}-t_k||\geq \frac{\eta_h - \eta_l}{DL_h}$ for all $k=0,1,...$. Hence, we prove the P1-1 for Theorem \ref{thm:union-CBF-switching 2} in case (b). 
    
    \textit{\underline{Proof of P1-2: }}
    Since $\sigma(t)\in\mathcal{P}$ for all $t\geq 0$, then, condition C2-1 of Theorem 3 implies P1-2 in both (a) and (b) cases.
\end{proof}  

\textbf{Remark: } Since condition C2-1 also implies C1-2, the union-CBFs condition in Theorem \ref{thm:union-CBF-switching 2} is also a sufficient condition for Theorem \ref{thm:union-CBF-switching 1}. {In the experiments, we will show an example that is verified by Theorem \ref{thm:union-CBF-switching 1} but can not be verified by Theorem \ref{thm:union-CBF-switching 2}.}
\section{Verification Framework}
\label{sec:verification}
This section introduces SOS frameworks to verify sufficient conditions in Theorem \ref{thm:union-CBF-switching 1} and Theorem \ref{thm:union-CBF-switching 2}. Given a polynomial system dynamics \eqref{eq:sys_dyn_control_aff} and a collection of CBFs $h_1(\mathbf{x}),\dots,h_{N_h}(\mathbf{x})$, verifying the union of these CBFs is to verify (a) Theorem \ref{thm:union-CBF-switching 1} or Theorem \ref{thm:union-CBF-switching 2} is true, and (b) validity of these CBFs, meaning that $\mathcal{X}_c\subseteq\mathcal{X}_s$. Hence, in what follows, we first introduce the SOS verification frameworks for Theorem \ref{thm:union-CBF-switching 1}. Then we introduce the verification framework for Theorem \ref{thm:union-CBF-switching 2}. Finally, we verify that $\mathcal{X}_c\subseteq\mathcal{X}_s$.

We use $\mathcal{S}^{\mathcal{P}}$ to denote the collection of all possible subsets of $\mathcal{P}$ except $\emptyset$. During the verification of Theorem \ref{thm:union-CBF-switching 1} and Theorem \ref{thm:union-CBF-switching 2}, we let $\Bar{\mathcal{U}}=\{\mathbf{u}|\mathbf{A}\mathbf{u}\leq\mathbf{c}-\epsilon_{u}\}$ with a given constant $\epsilon_{u}>0$ so that $\Bar{\mathcal{U}}\subset Int(\mathcal{U})$. For simplicity, we also define matrix $\boldsymbol{\Lambda}_p(\mathbf{x})$ and vector $\boldsymbol{\xi}_p(\mathbf{x})$ such that
\begin{equation}
\label{eq:xi_lambda}
    \boldsymbol{\Lambda}_p(\mathbf{x}) = 
    \begin{bmatrix}
        -L_{\mathbf{G}}h_p(\mathbf{x})\\
        \mathbf{A}
    \end{bmatrix};
    \boldsymbol{\xi}_p(\mathbf{x}) = 
    \begin{bmatrix}
        L_{\mathbf{f}}h_p(\mathbf{x}) + \kappa h_p(\mathbf{x}) - \epsilon_{cbf}\\
        \mathbf{c} - \epsilon_{u}
    \end{bmatrix}
\end{equation}
where $\eta, \epsilon$ are specified positive constants. Hence, for any $\mathbf{u}$ with $\boldsymbol{\Lambda}_p(\mathbf{x})\mathbf{u}\leq\boldsymbol{\xi}_p(\mathbf{x})$, $\mathbf{u}$ strictly satisfies constraints \eqref{eq:CBF-QP-safety} and \eqref{eq:CBF-QP-contollim}.

\subsection{Framework For Switching Strategy I}
\label{sec:verification_I}
According the defined symbols above, we verify Theorem \ref{thm:union-CBF-switching 1} by solving the following problem.

\textbf{Verification Problem 1: }Given polynomials $h_1(\mathbf{x})$,$\dots$, $h_{N_h}(\mathbf{x})$, system model \eqref{eq:sys_dyn_control_aff}, positive constants $\kappa$, $\eta$, $\epsilon$ and constant matrix $\mathbf{A}$ and vector $\mathbf{c}$, verify that, $\forall\mathbf{x}\in\mathcal{X}_c$, $\exists p\in\mathcal{P}$ and $\mathbf{u}\in\mathbb{R}^{n_u}$ such that $h_p(\mathbf{x})\geq 0$ and $\Lambda_p(\mathbf{x})\mathbf{u}\leq\xi_p(\mathbf{x})$.

We solve Problem 1 by three steps. First, we partition the set $\mathcal{X}_c$. Next, we collect all the non-empty subsets of $\mathcal{X}_c$. Finally, for each of these non-empty subsets, we verify that, for all states $\mathbf{x}$ in the subset, there exists $p\in\mathcal{P}$ and $\mathbf{u}\in\mathbb{R}^{n_u}$ such that $h_p(\mathbf{x})\geq 0$ and $\Lambda_p(\mathbf{x})\mathbf{u}\leq\xi_p(\mathbf{x})$.

\textbf{\underline{Step 1:}} We partition the set $\mathcal{X}_c$ into subsets such that $\mathcal{X}_c = \cup_{\mathcal{N}\in \mathcal{S}^{\mathcal{P}}}\mathcal{X}_{\mathcal{N}}$, where $\mathcal{X}_{\mathcal{N}}$ for all $\mathcal{N}\in \mathcal{S}^{\mathcal{P}}$ is defined as
\begin{equation*}
    \mathcal{X}_{\mathcal{N}} = \{\mathbf{x}|h_p(\mathbf{x})\geq0, \forall p\in\mathcal{N}; h_{p'}(\mathbf{x})<0, \forall p'\in\mathcal{P}\setminus\mathcal{N}\}.
\end{equation*}

To solve Problem 1, we consider the following two statements.
\begin{itemize}
    \item [T1: ]$\forall \mathcal{N}\in\mathcal{S}^{\mathcal{P}}$ and $\forall \mathbf{x}\in\mathcal{X}_{\mathcal{N}}$, $\exists p\in\mathcal{N}$ and $\mathbf{u}\in\mathbb{R}^{n_u}$ such that $\Lambda_p(\mathbf{x})\mathbf{u}\leq\xi_p(\mathbf{x})$.
    \item [T2: ]$\forall \mathbf{x}\in\mathcal{X}_{\mathcal{N}}$, $\exists p\in\mathcal{N}$ and $\mathbf{u}\in\mathbb{R}^{n_u}$ such that $\Lambda_p(\mathbf{x})\mathbf{u}\leq\xi_p(\mathbf{x})$.
\end{itemize}
According to the partition for $\mathcal{X}_c$ and definition of $\mathcal{X}_{\mathcal{N}}$, verifying T1 is sufficient to verify Theorem \ref{thm:union-CBF-switching 1}, and is equivalent to iteratively verifying T2 over all $\mathcal{N}\in\mathcal{S}^{\mathcal{P}}$. 

\textbf{\underline{Step 2:}} In the most general case where $\mathcal{X}_{\mathcal{N}}\neq\emptyset$ for all $\mathcal{N}\in\mathcal{S}^{\mathcal{P}}$, the verification of T2 is repeated for $2^N-1$ times. However, in cases where $\mathcal{X}_1,...,\mathcal{X}_{N_h}$ are located sparsely, there are some $\mathcal{N}\in\mathcal{S}^{\mathcal{P}}$ such that $\mathcal{X}_{\mathcal{N}}=\emptyset$. Verifying T1 only requires to repeat the verification of T2 for all $\mathcal{X}_{\mathcal{N}}\neq\emptyset$. In this step, we check whether $\mathcal{X}_{\mathcal{N}} = \emptyset$ for all $\mathcal{N}\in\mathcal{S}^{\mathcal{P}}$ so that, at the end of this step, we should have the collection $\mathcal{S}_{ne} = \{\mathcal{N}\in\mathcal{S}^{\mathcal{P}}|\mathcal{X}_{\mathcal{N}}\neq\emptyset\}$ where the subscript $ne$ stands for ``non-empty". 

The following lemma provides an SOS program to check whether $\mathcal{X}_{\mathcal{N}}=\emptyset$.
\begin{lemma}
\label{lemma:non_empty_subset_union}
    $\forall \mathcal{N}\in\mathcal{S}^{\mathcal{P}}$, $\mathcal{X}_{\mathcal{N}} = \emptyset$ if the following SOS program is feasible.
    \begin{subequations}
    \label{eq:non_empty_subset_union}
    \begin{align}
        \text{Find } & s_p(\mathbf{x}, \mathbf{c}), \forall p\in\mathcal{N}, \text{ and }s_{p'}(\mathbf{x}, \mathbf{c}), \forall p'\in\mathcal{P}\setminus\mathcal{N}\\
        \text{s.t. } & -1 - \sum_{p\in\mathcal{N}}s_p(\mathbf{x},\mathbf{c})h_p(\mathbf{x}) \nonumber\\
        &-\sum_{p'\in\mathcal{P}\setminus\mathcal{N}}q_{p'}(\mathbf{x}, \mathbf{c})[c_{p'}^2h_{p'}(\mathbf{x})+1] \text{ is SOS}\\
        &s_p(\mathbf{x}, \mathbf{c}), \forall p\in\mathcal{N} \text{ are SOS}
    \end{align}
    \end{subequations}
    where $\mathbf{c}$ is a vector of scalar variables $c_{p'}$ with $ p'\in\mathcal{P}\setminus\mathcal{N}$, and each $q_{p'}(\mathbf{x},\mathbf{c})$is a polynomial of $\mathbf{x}$ and $\mathbf{c}$. 
\end{lemma}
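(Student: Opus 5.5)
The plan is to reduce the statement to Lemma \ref{lemma:empty_verif} by rewriting the strict inequalities in the definition of $\mathcal{X}_{\mathcal{N}}$ as polynomial equalities in an augmented set of variables $(\mathbf{x},\mathbf{c})$. For a fixed $\mathcal{N}\in\mathcal{S}^{\mathcal{P}}$, I would define the lifted semialgebraic set
\begin{equation*}
    \tilde{\mathcal{S}}_{\mathcal{N}} = \left\{(\mathbf{x},\mathbf{c}) \,\middle|\, h_p(\mathbf{x})\geq 0,\ \forall p\in\mathcal{N};\ c_{p'}^2h_{p'}(\mathbf{x})+1=0,\ \forall p'\in\mathcal{P}\setminus\mathcal{N}\right\}.
\end{equation*}
The first key step is to show that $\mathcal{X}_{\mathcal{N}}$ is exactly the projection of $\tilde{\mathcal{S}}_{\mathcal{N}}$ onto the $\mathbf{x}$-coordinates. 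In particular, $\mathcal{X}_{\mathcal{N}}=\emptyset$ if and only if $\tilde{\mathcal{S}}_{\mathcal{N}}=\emptyset$.

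This projection claim is an elementary two-way check.

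If $h_{p'}(\mathbf{x})<0$, set $c_{p'}=1/\sqrt{-h_{p'}(\mathbf{x})}$. Then $c_{p'}^2h_{p'}(\mathbf{x})+1=0$, so every $\mathbf{x}\in\mathcal{X}_{\mathcal{N}}$ lifts to a point of $\tilde{\mathcal{S}}_{\mathcal{N}}$.

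Conversely, suppose $c_{p'}^2h_{p'}(\mathbf{x})=-1$. Then $c_{p'}\neq 0$ and $h_{p'}(\mathbf{x})=-1/c_{p'}^2<0$, so the projection of any point of $\tilde{\mathcal{S}}_{\mathcal{N}}$ lies in $\mathcal{X}_{\mathcal{N}}$.

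Only the direction "$\mathcal{X}_{\mathcal{N}}\neq\emptyset\Rightarrow\tilde{\mathcal{S}}_{\mathcal{N}}\neq\emptyset$" is needed for the sufficiency claim, but both directions are immediate.

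The second step is to apply Lemma \ref{lemma:empty_verif} in the variable $(\mathbf{x},\mathbf{c})\in\mathbb{R}^{n_x+|\mathcal{P}\setminus\mathcal{N}|}$. Take the equality constraints to be $\phi_{p'}=c_{p'}^2h_{p'}(\mathbf{x})+1$ for $p'\in\mathcal{P}\setminus\mathcal{N}$, and the inequality constraints to be $\psi_p=h_p(\mathbf{x})$ for $p\in\mathcal{N}$. The multiplier roles must be matched with care. In the SOS program \eqref{eq:non_empty_subset_union}, the SOS multipliers $s_p$ are attached to the inequalities $h_p\geq 0$, and the free polynomials $q_{p'}$ are attached to the equalities. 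This is the correct Positivstellensatz pairing, even though Lemma \ref{lemma:empty_verif} as typed labels the multipliers the other way.

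Rather than rely on the labeling, I would just redo the short certificate argument directly. Suppose $(\mathbf{x},\mathbf{c})\in\tilde{\mathcal{S}}_{\mathcal{N}}$ and the program is feasible. Evaluating the certified SOS polynomial at that point gives
\begin{equation*}
    0\leq -1-\sum_{p\in\mathcal{N}}s_p h_p - \sum_{p'}q_{p'}\cdot 0 \leq -1,
\end{equation*}
because $s_ph_p\geq 0$ there. This is a contradiction, so $\tilde{\mathcal{S}}_{\mathcal{N}}=\emptyset$, and by the first step $\mathcal{X}_{\mathcal{N}}=\emptyset$.

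The main subtlety is the handling of the strict inequalities $h_{p'}<0$, since Lemma \ref{lemma:empty_verif} accepts only equalities and non-strict inequalities. Replacing $h_{p'}<0$ by $h_{p'}\leq 0$ would enlarge the set and lose the intended partition. The slack-variable encoding $c_{p'}^2h_{p'}+1=0$ is the step to justify carefully, and the projection argument above does that. Everything else is direct substitution.
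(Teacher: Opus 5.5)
Your proposal is correct and follows the paper's proof: the paper also encodes each strict inequality $h_{p'}(\mathbf{x})<0$ as $c_{p'}^2h_{p'}(\mathbf{x})+1=0$, notes that $\mathcal{X}_{\mathcal{N}}$ is empty if and only if the lifted set in $(\mathbf{x},\mathbf{c})$ is empty, and then invokes Lemma~\ref{lemma:empty_verif}. Your direct evaluation of the certificate, with SOS multipliers on the inequalities $h_p\geq 0$ and free multipliers on the equalities, is a worthwhile addition, because it avoids relying on Lemma~\ref{lemma:empty_verif} as printed, where the multiplier roles are swapped.
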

\begin{proof}
    For a function $h:\mathbb{R}^{n_x}\rightarrow\mathbb{R}$, $\exists \mathbf{x}$ with $h(\mathbf{x})<0$ if and only if $\exists (\mathbf{x}, c)\in\mathbb{R}^{n_x+1}$ such that $c^2h(\mathbf{x})=-1$. Hence, $\mathcal{X}_{\mathcal{N}}$ is empty if and only if the following set is empty.
    \begin{equation*}
        \{(\mathbf{x},\mathbf{c})|h_p(\mathbf{x})\geq 0, \forall p\in\mathcal{N}; c^2_{p'}h_{p'}(\mathbf{x}) + 1=0, \forall p'\in\mathcal{P}\setminus\mathcal{N}\}
    \end{equation*}
    where $\mathbf{c}$ is a vector of scalar variables $c_{p'}$ with $ p'\in\mathcal{P}\setminus\mathcal{N}$. Then, by Theorem \ref{lemma:empty_verif}, we have the SOS program \eqref{eq:non_empty_subset_union}.
\end{proof}

\textbf{\underline{Step 3: }} In this step, we verify T2 for each $\mathcal{X}_{\mathcal{N}}$ with $ \mathcal{N}\in\mathcal{S}_{ne}$. We can see from Lemma \ref{lemma:non_empty_subset_union} that the terms $h_{p'}(\mathbf{x})<0$ in set $\mathcal{X}_{\mathcal{N}}$ could introduce extra variables to SOS programs. This complicates polynomial structures and makes SOS program harder to solve. Hence, instead of consider $\mathcal{X}_c$, in this step, we consider $\mathcal{X}'_{\mathcal{N}}$ defined as
\begin{equation*}
    \mathcal{X}'_{\mathcal{N}} = \{\mathbf{x}|h_p(\mathbf{x})\geq0, \forall p\in\mathcal{N}; h_{p'}(\mathbf{x})\leq 0, \forall p'\in\mathcal{P}\setminus\mathcal{N}\}.
\end{equation*}
Instead of verifying T2, we verify the following statement.
\begin{itemize}
    \item [T3: ]$\forall \mathbf{x}\in\mathcal{X}'_{\mathcal{N}}$, $\exists p\in\mathcal{N}$ and $\mathbf{u}\in\mathbb{R}^{n_u}$ such that $\Lambda_p(\mathbf{x})\mathbf{u}\leq\xi_p(\mathbf{x})$.
\end{itemize}
Since $\mathcal{X}_{\mathcal{N}}\subset\mathcal{X}'_{\mathcal{N}}$, then verifying T3 also verifies T2. 
 
The following theorem provides the SOS program to verify T3 for all $\mathcal{N}\in\mathcal{S}_{ne}$.
\begin{theorem}
\label{thm:verification_T3}
    For all $\mathcal{N}\in\mathcal{S}_{ne}$, $\mathcal{X}'_{\mathcal{N}}$ satisfies T3 if the following SOS program is feasible.
    \begin{subequations}
    \label{eq:T3_verification_prog}
    \begin{align}
        \text{Find } &s_p(\mathbf{x},\mathbf{y}), \mathbf{q}_p(\mathbf{x}, \mathbf{y}), r_p(\mathbf{x},\mathbf{y}), \forall p\in\mathcal{N} \text{ and } \nonumber \\
        &s_{p'}(\mathbf{x},\mathbf{y}), \forall p'\in\mathcal{P}\setminus\mathcal{N}\\
        \text{s.t. } &-1 - \sum_{p\in\mathcal{N}}s_p(\mathbf{x}, \mathbf{y})h_p(\mathbf{x}) + \sum_{p'\in\mathcal{P}\setminus\mathcal{N}} s_{p'}(\mathbf{x},\mathbf{y})h_{p'}(\mathbf{x}) \nonumber \\
        &- \sum_{p\in\mathcal{N}}\left(\mathbf{q}^T_p(\mathbf{x},\mathbf{y})\boldsymbol{\Lambda}^T_p(\mathbf{x})\mathbf{y}_p^2 + r_p(\mathbf{x},\mathbf{y})(\boldsymbol{\xi}_p(\mathbf{x})\mathbf{y}^2_p + 1)\right) \nonumber\\
        &\text{ is SOS} \label{eq:T3_verif_prog_SOS}\\
        &s_p(\mathbf{x},\mathbf{y}), \forall p\in\mathcal{N} \text{ are SOS}\\
        &s_{p'}(\mathbf{x},\mathbf{y}), \forall p'\in\mathcal{P}\setminus\mathcal{N} \text{ are SOS}
    \end{align}
    \end{subequations}
    where $\mathbf{y}$ is a vector of variables concatenated by all $\mathbf{y}_p$ for all $p\in\mathcal{N}$, each $\mathbf{q}_p(\mathbf{x},\mathbf{y})$ is a vector of polynomials, and each $r_p(\mathbf{x},\mathbf{y})$ is a polynomial. 
\end{theorem}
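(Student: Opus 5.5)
We argue by contraposition, reducing T3 to the emptiness of a single semialgebraic set in the variables $(\mathbf{x},\mathbf{y})$ and then applying Lemma \ref{lemma:empty_verif}. Fix $\mathcal{N}\in\mathcal{S}_{ne}$. T3 fails exactly when there is some $\mathbf{x}\in\mathcal{X}'_{\mathcal{N}}$ such that, for every $p\in\mathcal{N}$, the linear system $\boldsymbol{\Lambda}_p(\mathbf{x})\mathbf{u}\leq\boldsymbol{\xi}_p(\mathbf{x})$ in $\mathbf{u}$ has no solution. For each fixed $\mathbf{x}$ and $p$, $\boldsymbol{\Lambda}_p(\mathbf{x})$ and $\boldsymbol{\xi}_p(\mathbf{x})$ are a constant matrix and vector. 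We therefore apply the squared-variable form of Farkas' Lemma (Lemma \ref{lemma:farkas}) stated after it: infeasibility holds if and only if there is a $\mathbf{y}_p$ with $\boldsymbol{\Lambda}^T_p(\mathbf{x})\mathbf{y}_p^2=\mathbf{0}$ and $\boldsymbol{\xi}_p^T(\mathbf{x})\mathbf{y}_p^2=-1$. The substitution $\mathbf{z}=\mathbf{y}_p^2$ encodes $\mathbf{z}\geq\mathbf{0}$, and every nonnegative $\mathbf{z}$ arises this way.

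The failure of T3 is then equivalent to nonemptiness of
\begin{equation*}
\mathcal{S}_{\mathcal{N}}=\left\{(\mathbf{x},\mathbf{y})\;\middle|\;
\begin{array}{l}
h_p(\mathbf{x})\geq 0,\ \boldsymbol{\Lambda}^T_p(\mathbf{x})\mathbf{y}_p^2=\mathbf{0},\ \boldsymbol{\xi}_p^T(\mathbf{x})\mathbf{y}_p^2+1=0,\ \forall p\in\mathcal{N};\\
-h_{p'}(\mathbf{x})\geq 0,\ \forall p'\in\mathcal{P}\setminus\mathcal{N}
\end{array}\right\},
\end{equation*}
where $\mathbf{y}$ stacks all $\mathbf{y}_p$ for $p\in\mathcal{N}$. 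The sufficiency direction is the one that matters. If some $\mathbf{x}\in\mathcal{X}'_{\mathcal{N}}$ violates T3, then for each $p\in\mathcal{N}$ Farkas supplies a $\mathbf{y}_p$, and stacking them gives a point of $\mathcal{S}_{\mathcal{N}}$. So $\mathcal{S}_{\mathcal{N}}=\emptyset$ implies T3.

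Next we instantiate Lemma \ref{lemma:empty_verif} for $\mathcal{S}_{\mathcal{N}}$. The inequality constraints are $\psi=h_p$ for $p\in\mathcal{N}$ and $\psi=-h_{p'}$ for $p'\notin\mathcal{N}$, with SOS multipliers $s_p$ and $s_{p'}$. The equality constraints are the components of $\boldsymbol{\Lambda}^T_p(\mathbf{x})\mathbf{y}_p^2$, with polynomial multipliers collected in $\mathbf{q}_p$, and $\boldsymbol{\xi}_p^T(\mathbf{x})\mathbf{y}_p^2+1$, with multiplier $r_p$. The certificate expression of the lemma then matches \eqref{eq:T3_verif_prog_SOS} term by term: the $-h_{p'}$ constraints produce the $+s_{p'}h_{p'}$ terms, and the equality multipliers enter with minus signs.

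A few bookkeeping points need care. Lemma \ref{lemma:empty_verif} lets the inequality multipliers be arbitrary polynomials, whereas Theorem \ref{thm:verification_T3} requires them to be SOS. This is harmless, because feasibility with SOS multipliers still gives emptiness by the usual Positivstellensatz argument: at any point of $\mathcal{S}_{\mathcal{N}}$ the certificate expression would equal $-1$ minus nonnegative terms, yet it must be $\geq 0$. We will make this one-line evaluation argument explicit rather than citing the lemma verbatim. We will also read $\boldsymbol{\xi}_p(\mathbf{x})\mathbf{y}_p^2$ as the inner product $\boldsymbol{\xi}_p^T(\mathbf{x})\mathbf{y}_p^2$.

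The main obstacle is justifying Farkas pointwise in $\mathbf{x}$ while quantifying over $\mathbf{u}\in\mathbb{R}^{n_u}$, since $\boldsymbol{\Lambda}_p$ and $\boldsymbol{\xi}_p$ are polynomial in $\mathbf{x}$. Because the Farkas equivalence holds at each fixed $\mathbf{x}$ and only the existence of some $\mathbf{y}$ is needed, lifting to the joint variable $(\mathbf{x},\mathbf{y})$ is legitimate. Finally, since the argument works for each $\mathcal{N}\in\mathcal{S}_{ne}$ separately, T3 holds for all such $\mathcal{N}$, which proves the theorem.
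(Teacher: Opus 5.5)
Your proposal is correct and takes essentially the same route as the paper: negate T3, apply the squared-variable Farkas alternative pointwise in $\mathbf{x}$ to obtain the set $\mathcal{V}$ (your $\mathcal{S}_{\mathcal{N}}$), and certify that set empty via the SOS program. Giving the direct evaluation argument instead of citing Lemma~\ref{lemma:empty_verif} verbatim is the safer choice, because the lemma as printed puts SOS multipliers on the equalities and free polynomial multipliers on the inequalities; that is the reverse of what is sound and of what program \eqref{eq:T3_verification_prog} uses, so the program is not literally an instance of the printed lemma.
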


\begin{proof}
    The the following T4 statement is the negation of T3: 
    \begin{itemize}
        \item[T4: ]$\exists \mathbf{x}\in\mathcal{X}'_{\mathcal{N}}$ such that, $\forall p\in\mathcal{N}$, $\nexists \mathbf{u}_p$ with $\boldsymbol{\Lambda}_p(\mathbf{x})\mathbf{u}_p\leq\boldsymbol{\xi}_p(\mathbf{x})$
    \end{itemize}
    By Lemma \ref{lemma:farkas}, T4 is equivalent to the \textit{non-emptiness} of the following set.
    \begin{align*}
        &\mathcal{V} = \\
        &\left\{
        \begin{array}{l|ll}
        (\mathbf{x},\mathbf{y}) &
        \begin{array}{ll}
        h_p(\mathbf{x})\geq 0, \forall p\in\mathcal{N}\\
        -h_{p'}(\mathbf{x})\geq 0, \forall p'\in\mathcal{P}\setminus\mathcal{N}\\
        \boldsymbol{\Lambda}^T_p(\mathbf{x})\mathbf{y}_p^2 = \mathbf{0},
        \boldsymbol{\xi}^T_p(\mathbf{x})\mathbf{y}_p^2 + 1 = 0, \forall p\in\mathcal{N}
        \end{array}
        \end{array}
        \right\}
    \end{align*}
    where the $\mathbf{y}$ is a vector of variables concatenated by all $\mathbf{y}_p$ for all $p\in\mathcal{N}$. The first two lines of $\mathcal{V}$ correspond to $\mathcal{X}'_{\mathcal{N}}$, and the last line of $\mathcal{V}$ corresponds to the result of Farkas' Lemma for each $p\in\mathcal{N}$. Since T4 is the negation of T3, and T4 is equivalent to the non-emptiness of $\mathcal{V}$, then T3 is equivalent to the emptiness of $\mathcal{V}$. Now, applying Theorem \ref{lemma:empty_verif}, we get the SOS program \eqref{eq:T3_verification_prog} that  verifies T3. 
\end{proof}

To summarize, in this subsection, we verify sufficient conditions for Theorem \ref{thm:union-CBF-switching 1}. We first compute all possible subsets of $\mathcal{X}_c$. Second, we use Program \eqref{eq:non_empty_subset_union} to discard all the empty subsets of $\mathcal{X}_c$ and returns all the non-empty subsets. Finally, we verify a sufficient condition for T3 using Program \eqref{eq:T3_verification_prog} for all the non-empty subsets. 

\subsection{Framework For Switching Strategy II}
In this subsection, we propose the SOS programs that verifies Theorem \ref{thm:union-CBF-switching 2}. 

\textbf{Verification Problem 2: }Given a collection of polynomials $h_1(\mathbf{x})$, $\dots$, $h_{N_h}(\mathbf{x})$, system model \eqref{eq:sys_dyn_control_aff}, positive constants $\kappa, \eta > 0$, and constant matrix and vector $\mathbf{A}, \mathbf{c}$, verify that for all $p\in\mathcal{P}$ and $\mathbf{x}\in\mathcal{X}_p$, there exists $\mathbf{u}\in\mathbb{R}^{n_u}$ with $\boldsymbol{\Lambda}_p(\mathbf{x})\mathbf{u}\leq \boldsymbol{\xi}_p(\mathbf{x})$. 

\begin{theorem}
    Problem 2 can be verified by checking the feasibility of the following SOS program for all $p\in\mathcal{P}$.
    \begin{subequations}
    \label{eq:verify_theorem4}
    \begin{align}
        \text{ Find }& s(\mathbf{x},\mathbf{y}), \mathbf{q}(\mathbf{x},\mathbf{y}), r(\mathbf{x}, \mathbf{y})\\ 
        \text{s.t. }& -1 - s(\mathbf{x},\mathbf{y})h_p(\mathbf{x}) - \mathbf{q}(\mathbf{x},\mathbf{y})\boldsymbol{\Lambda}^T_p(\mathbf{x})\mathbf{y}^2 \nonumber \\
        & - r(\mathbf{x},\mathbf{y})(\boldsymbol{\xi}^T_p(\mathbf{x})\mathbf{y}^2+1) \text{ is SOS} \label{eq:thm4_verif_sos}\\
        & s(\mathbf{x},\mathbf{y}) \text{is SOS}
    \end{align}
    where $\mathbf{y}$ is a vector of variables, $\mathbf{q}(\mathbf{x},\mathbf{y})$ is a vector of polynomials, and $r(\mathbf{x},\mathbf{y})$ is a polynomial. 
    \end{subequations}
\end{theorem}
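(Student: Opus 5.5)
The plan follows the proof of Theorem \ref{thm:verification_T3}, specialized to a single index $p$ and with no complementary constraints $h_{p'}\le 0$. Fix $p\in\mathcal{P}$. Verification Problem 2 for this $p$ says that for every $\mathbf{x}\in\mathcal{X}_p$ there is some $\mathbf{u}\in\mathbb{R}^{n_u}$ with $\boldsymbol{\Lambda}_p(\mathbf{x})\mathbf{u}\leq\boldsymbol{\xi}_p(\mathbf{x})$. I will first write down its negation: there exists $\mathbf{x}$ with $h_p(\mathbf{x})\geq 0$ such that the linear system $\boldsymbol{\Lambda}_p(\mathbf{x})\mathbf{u}\leq\boldsymbol{\xi}_p(\mathbf{x})$ is infeasible in $\mathbf{u}$.

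Next I will convert the infeasibility into a polynomial condition. For each fixed $\mathbf{x}$, the squared-variable form of Lemma \ref{lemma:farkas} says the linear system is infeasible if and only if there is $\mathbf{y}$ with $\boldsymbol{\Lambda}_p^T(\mathbf{x})\mathbf{y}^2=\mathbf{0}$ and $\boldsymbol{\xi}_p^T(\mathbf{x})\mathbf{y}^2=-1$. Here $\mathbf{y}^2$ plays the role of the nonnegative multiplier $\mathbf{z}$; every $\mathbf{z}\ge\mathbf{0}$ equals $\mathbf{y}^2$ for $\mathbf{y}=\sqrt{\mathbf{z}}$, so the substitution loses nothing. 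Hence the negated statement holds exactly when the semialgebraic set
\begin{equation*}
\mathcal{V}_p=\{(\mathbf{x},\mathbf{y})\mid h_p(\mathbf{x})\geq 0,\ \boldsymbol{\Lambda}_p^T(\mathbf{x})\mathbf{y}^2=\mathbf{0},\ \boldsymbol{\xi}_p^T(\mathbf{x})\mathbf{y}^2+1=0\}
\end{equation*}
is nonempty. So Problem 2 for index $p$ is equivalent to $\mathcal{V}_p=\emptyset$. All the defining functions are polynomials in $(\mathbf{x},\mathbf{y})$, because $\mathbf{f},\mathbf{G},h_p$ are polynomials and $\boldsymbol{\Lambda}_p,\boldsymbol{\xi}_p$ are built from them in \eqref{eq:xi_lambda}.

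I will then apply Lemma \ref{lemma:empty_verif} in the joint variables $(\mathbf{x},\mathbf{y})$, using the following assignment:
\begin{itemize}
\item the single inequality $\psi=h_p$;
\item the equality constraints $\phi$ given by the $n_u$ components of $\boldsymbol{\Lambda}_p^T\mathbf{y}^2$ and the scalar $\boldsymbol{\xi}_p^T\mathbf{y}^2+1$.
\end{itemize}
The resulting certificate is exactly \eqref{eq:thm4_verif_sos}. The multiplier $s$ on the inequality is required to be SOS. The multipliers $\mathbf{q}$ and $r$ on the equalities may be arbitrary polynomials, since any polynomial multiple of an expression that vanishes on the set still vanishes there.

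To make the logic explicit, I will include the one-line Positivstellensatz argument. At any point of $\mathcal{V}_p$ the equality terms vanish and $s h_p\ge 0$. The SOS expression would then equal $-1-s h_p\le -1<0$, which contradicts its nonnegativity. So feasibility of \eqref{eq:verify_theorem4} forces $\mathcal{V}_p=\emptyset$, and Problem 2 holds for this $p$. Repeating the argument for every $p\in\mathcal{P}$ covers the whole problem.

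Finally, the solution $\mathbf{u}$ satisfies $\boldsymbol{\Lambda}_p\mathbf{u}\le\boldsymbol{\xi}_p$, which gives margin $\epsilon_{cbf}$ in the CBF constraint and $\mathbf{A}\mathbf{u}\leq\mathbf{c}-\epsilon_u<\mathbf{c}$, so $\mathbf{u}\in\bar{\mathcal{U}}\subset Int(\mathcal{U})$. This yields C2-1 of Theorem \ref{thm:union-CBF-switching 2}. The only real care needed is this Farkas-with-squares equivalence applied pointwise in $\mathbf{x}$; everything else is routine.
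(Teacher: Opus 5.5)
Your proposal is correct and follows essentially the same route as the paper: for each fixed $p$, the squared-multiplier form of Farkas' lemma applied pointwise in $\mathbf{x}$ turns Problem 2 into emptiness of $\mathcal{V}_p$, and the SOS emptiness certificate of Lemma \ref{lemma:empty_verif} then yields Program \eqref{eq:verify_theorem4}. Your explicit one-line soundness check is worth keeping, because it shows directly that the SOS multiplier belongs on the inequality $h_p\geq 0$ and the free multipliers $\mathbf{q},r$ on the equalities, which is what the program uses; Lemma \ref{lemma:empty_verif} as printed has these multiplier roles swapped.
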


\begin{proof}
    By Lemma \ref{lemma:farkas}, for all $\mathbf{x}\in\mathcal{X}_p$, $\exists \mathbf{u}\in\mathbb{R}^{n_u}$ such that $\boldsymbol{\Lambda}_p(\mathbf{x})\mathbf{u}\leq \boldsymbol{\xi}_p(\mathbf{x})$ if and only if the following set is empty.
    \begin{equation*}
        \mathcal{V}_s = \{(\mathbf{x},\mathbf{y})| h_p(\mathbf{x})\geq 0, \boldsymbol{\Lambda}^T_p(\mathbf{x})\mathbf{y}^2=\mathbf{0}, \boldsymbol{\xi}^T_p(\mathbf{x})\mathbf{y}^2 + 1 = 0\}
    \end{equation*}
    Applying Theorem \ref{lemma:empty_verif} to $\mathcal{V}_s$, we get the SOS Program \eqref{eq:verify_theorem4}. Problem 2 is verified if Program \eqref{eq:verify_theorem4} is feasible for all $p\in\mathcal{P}$.
\end{proof}

\textbf{Remark: }Comparing Program \eqref{eq:verify_theorem4} to Program \eqref{eq:T3_verification_prog}, we can see the verification of Problem 2 is in fact a special case for verification of Problem 1. During the verification of Problem 1, if we replace $\mathcal{N}$ as the index set $\{p\}$, and replace $\mathcal{X}_{\mathcal{N}}$ as $\mathcal{X}_p = \{\mathbf{x}|h_p(\mathbf{x})\geq 0\}$, then Program \eqref{eq:T3_verification_prog} and Program \eqref{eq:verify_theorem4} would be the same. This observation coincides with the remark in Section \ref{sec:switching_policy_ii} that Theorem \ref{thm:union-CBF-switching 2} is a sufficient condition of Theorem \ref{thm:union-CBF-switching 1}.

\subsection{Verifying Validity of CBFs}
In this subsection, we verify that $\mathcal{X}_c \subseteq \mathcal{X}_s$. Assume the unsafe region $\mathcal{X}_u = \mathcal{X}\setminus\mathcal{X}_s$ is identified by a collection of polynomials such that $\mathcal{X}_u=\{\mathbf{x}|\mathbf{l}(\mathbf{x})\leq 0\}$ where $\mathbf{l}(\mathbf{x})$ is a vector of polynomials. In order to verify that $\mathcal{X}_c \subseteq \mathcal{X}_s$, we need to verify that $\mathcal{X}_c\cap\mathcal{X}_u=\emptyset$. Since $\mathcal{X}_c = \cup_{p\in\mathcal{P}}\mathcal{X}_p$, then verifying $\mathcal{X}_c\cap\mathcal{X}_u=\emptyset$ is equivalent to verify $\mathcal{X}_p\cap\mathcal{X}_u = \emptyset$ for all $p\in\mathcal{P}$. Hence, to verify $\mathcal{X}_c \subseteq \mathcal{X}_s$, we verify that the set $\{\mathbf{x}|h_p(\mathbf{x})\geq 0, -\mathbf{l}(\mathbf{x})\geq 0\}$ is empty for all $p\in\mathcal{P}$. By Theorem \ref{lemma:empty_verif}, this is to solve the following SOS feasibility program.
\begin{subequations}
\label{eq:verify_validity_prog}
\begin{align}
    \text{ Find }&s(\mathbf{x}), \mathbf{s}_{l}(\mathbf{x}) \\
    \text{ s.t. }& -1 - s(\mathbf{x})h_p(\mathbf{x}) - \mathbf{s}^T_l(\mathbf{x})\mathbf{l}(\mathbf{x}) \text{ is SOS}\\
    &s(\mathbf{x}), \mathbf{s}_{l}(\mathbf{x}) \text{ are SOS}
\end{align}
\end{subequations}
where $\mathbf{s}_l(\mathbf{x})$ is a vector of SOS with the same size as vector $\mathbf{l}(\mathbf{x})$. {If $\mathcal{X}_u$ can not be approximated by polynomials, we can also use sampling based methods to verify the validity. Assume we have a set of unsafe states $\{\mathbf{x}_1,\dots,\mathbf{x}_N\}$ sampled from $\mathcal{X}_u$, then $h_1(\mathbf{x}),...,h_{N_h}(\mathbf{x})$ are valid if $h_p(\mathbf{x}_i) - c\geq 0$ for all $p\in\mathcal{P}$ and $i=1,2,...,N$. Here $c > 0$ is a specified constant that avoids the possible overlap between $\mathcal{X}_p$ and the actual $\mathcal{X}_u$ due to the sampling error.}

\section{Experiments}
\label{sec:experiments}
This section presents experiments that support our motivation and compares the efficiency of our proposed verification frameworks. The SOS verification programs are solved using MOSEK \cite{aps2022mosek} for all examples. For simplicity, we name the verification methods for Theorem \ref{thm:union-CBF-switching 1} and Theorem \ref{thm:union-CBF-switching 2} as Verif-I and Verif-II, respectively.

\subsection{Motivating Example}
\label{sec:motivating_example}

\begin{figure}[h]
    \centering
    \begin{subfigure}{0.23\textwidth}
        \includegraphics[width=\textwidth]{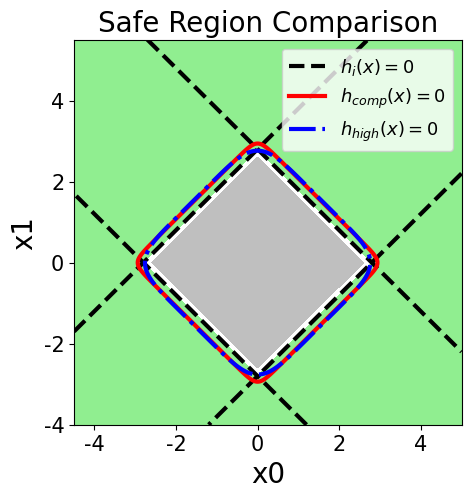}
        \caption{}
        \label{fig:coverage_comparison}
    \end{subfigure}
    \begin{subfigure}{0.23\textwidth}
        \includegraphics[width=\textwidth]{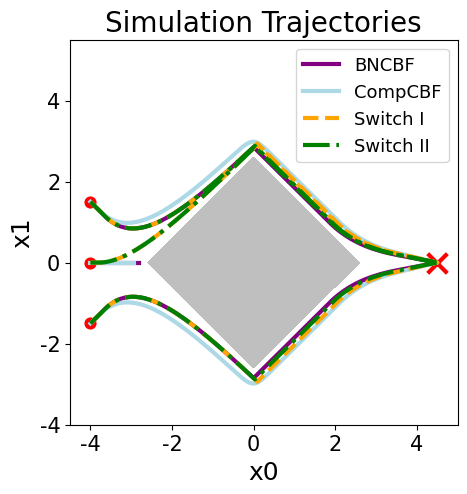}
        \caption{}
        \label{fig:trajectory_comparison}
    \end{subfigure}
    \caption{Comparison between the proposed switching union-CBFs and existing methods. Fig.\ref{fig:coverage_comparison} shows the safe region coverage of the union of 4 linear CBFs, composed CBF $h_{comp}(\mathbf{x})$ and a high-degree polynomial CBF $h_{high}(\mathbf{x})$. Fig.\ref{fig:trajectory_comparison} shows the performance comparison between safety filters using BNCBF, composed CBF, and the proposd switching union-CBFs with switching strategy I and II. In Fig \ref{fig:coverage_comparison}, the safe region $\mathcal{X}_c$ covered by the union CBFs is marked as green while $\mathcal{X}_u$ is marked as gray. Both the safe region $\mathcal{X}_{high}$ and $\mathcal{X}_{comp}$ are subsets of $\mathcal{X}_c$. In Fig. \ref{fig:trajectory_comparison}, the red dot denote initial states, and the red cross marks the goal state. At some states, composed CBF filter may get stuck due to expected equilibria; while BNCBF may get stuck due to over constraining. However, starting from the same initial state, the switching union-CBFs can avoid the sticking point and reach the goal. }
\label{fig:union_vs_high_degree}
\end{figure}

In this example, we show that a high-degree polynomial CBF and a composed CBF only covers the subset of the safe region of union-CBFs. We denote $h_{comp}(\mathbf{x})$ as the composed CBF of a collection of linear union-CBFs, and denote $h_{high}(\mathbf{x})$ as the high degree CBF. Let $\mathcal{X}_{comp} = \{\mathbf{x}|h_{comp}(\mathbf{x})\geq 0\}$, and $\mathcal{X}_{high} = \{\mathbf{x}|h_{high}(\mathbf{x})\geq 0\}$


In Fig. \ref{fig:union_vs_high_degree}, we consider a 2D single integrator $\dot{\mathbf{x}}=\mathbf{u}$ with control limits $-1 \leq \mathbf{u}_{(1)}\leq 1$ and $-1 \leq \mathbf{u}_{(2)}\leq 1$. The unsafe region has a square shape such that 
$\mathcal{X}_u = 
\{\mathbf{x}|
\pm \mathbf{x}_{(0)} \pm \mathbf{x}_{(1)} + 2.9 >=0 \}$. 
We choose 4 linear CBFs as 
$h_1(\mathbf{x})=-\mathbf{x}_{0} + \mathbf{x}_{(1)}-3$, 
$h_2(\mathbf{x})=\mathbf{x}_{0} + \mathbf{x}_{(1)}-3$, $h_3(\mathbf{x})=\mathbf{x}_{0} - \mathbf{x}_{(1)}-3$, and $h_4(\mathbf{x})=-\mathbf{x}_{0} - \mathbf{x}_{(1)}-3$. The composed CBF $h_{comp}(\mathbf{x})$ composes the 4 linear CBFs using method \cite{molnar2023composing}. The degree-4 CBF $h_{high}(\mathbf{x}) = (\mathbf{x}_{(0)} + \mathbf{x}_{(1)})^4 + (\mathbf{x}_{(0)} - \mathbf{x}_{(1)})^4 + 2.9^4$. Fig.\ref{fig:coverage_comparison} shows that the union-CBFs covers more safe states in the state space. For verifying the union-CBFs, Verif-I takes 48.95 seconds, while Verif-II only takes 0.16 seconds. On the other hand, the verification of $h_{high}(\mathbf{x})$ takes 197.21 seconds. Hence, compared to union of linear CBFs, a high-degree polynomial covers the subset of the union safe region, and also has longer verification time.

Using the same example setup, we compare the switching union-CBFs designed in this paper with previously proposed BNCBF \cite{glotfelter2020nonsmooth} and composed CBF \cite{molnar2023composing}. The simulated trajectories are shown in Fig. \ref{fig:trajectory_comparison}.
The overall safety performance of switching union-CBFs, BNCBF, and compsed CBF are similar. However, the trajectories starting from the red dot in the middle left of Fig.\ref{fig:trajectory_comparison} show that the composed CBF may get stuck at some states due to unexpected equilibria while the BNCBF may get stuck due to over constraining on BNCBF-QP. In this single integrator example, changing the class-$\kappa$ function of composed CBF or BNCBF constraint only changes the distance between the sticking point and the obstacle. However, using the switching union-CBFs, since we only follow one CBF constraints at a time, the trajectories starting from the same initial states do not have stikcing points and sucessfully reach the goal point.

\subsection{Performance Comparison of Verif-I and II} 

This subsection compares the efficiency between Verif-I and Verif-II. We consider the polynomial control system from \cite{wang2024safe}.
\begin{equation*}
\label{eq:polynomial_system}
    \begin{bmatrix}
        \dot{x}_1\\
        \dot{x}_2
    \end{bmatrix} = 
    \begin{bmatrix}
        x_2\\
        x_1 + \frac{1}{3}x_1^3 + x_2
    \end{bmatrix} + 
    \begin{bmatrix}
        (0.2x_1^2 + 0.2x_2 + 1)u_1\\
        (-0.2x_2^2 + 0.2x_1+4)u_2
    \end{bmatrix}
\end{equation*}
with control limits $-5\leq u_1 \leq 5$ and $-5\leq u_2 \leq 5$. We first show that Verif-I is sensitive to CBFs' layout of the union, while Verif-II is not. Then, we show that the total verification time of Verif-I grows rapidly as the number of CBFs increases, while the verification time of Verif-II grows linearly. 

We first focus on relative positions of CBFs' super level sets, which is called \textit{CBF layout}. Depending on the number of non-empty intersections between super level sets, we consider two types of CBF layouts, namely sparse layout and dense layout, shown in Fig. \ref{fig:different_layouts}. In both layout cases, each CBF is $h_i(\mathbf{x}) = 0.04 - ||\mathbf{x}-\mathbf{x}_i||^2$, $i=1,2,3$. In the sparse case $\mathbf{x}_1 = [-0.15, 0.15]$, $\mathbf{x}_2 = [0, 0]$, and $\mathbf{x}_3 = [0.15, -0.15]$. In the dense case $\mathbf{x}_1 = [-0.1, 0]$, $\mathbf{x}_2 = [0.1, 0]$, and $\mathbf{x}_3 = [0, 0.17]$. In the sparse case shown in Fig. \eqref{fig:sparse_layout}, Verif-I takes 76.12 seconds and Verif-II takes 1.12 seconds. In the dense case shown in Fig. \eqref{fig:dense_layout}, Verif-I takes 463.10 seconds while Verif-II only takes 1.14 seconds. This time comparison shows that Verif-I is sensitive to the layout of the CBFs in the union. According to Section \ref{sec:verification_I}, the total number of non-empty subsets computed in the step 2 of Verif-I depends on the layout of CBFs, and it directly decides the number of times we repeat the Program \eqref{eq:T3_verification_prog}. On the other hand, Verif-II only repeats the Program \eqref{eq:verify_theorem4} for each of the CBFs in the union, thus not sensitive to the layout.

The next comparison shows that, in the sparse layout, for the same number of CBFs, Verif-I takes longer time than Verif-II. The result is shown in Fig. \ref{fig:computation_time_comparison}. Compared to Verif-II, the computation time of Verif-I grow rapidly as the number of CBFs grows. This is caused by the increasing number of indeterminate variables in the Program \eqref{eq:T3_verification_prog} when verifying the non-empty intersections between CBFs. On the contrary, Verif-II only repeats the Program \eqref{eq:thm4_verif_sos} for each of the 0-super-level set, and the amount of indeterminate variables in Program \eqref{eq:verify_theorem4} is the same for all CBFs in the union. Hence, the total computation time for Verif-II grows linearly with respect to the number of CBFs. 

The efficiency advantage of Verif-II over Verif-I is obvious. however, we also need to point that, for some cases, Verif-I provides valid verification results while Verif-II does not. Consider a 2D single integrator $\dot{\mathbf{x}} = [v, u]^T$ where $v$ is a constant velocity and $u\in[-1.5, 1.5]$ is the control input. This system is impossible to be forward invariant within any circular region. Hence, If we consider the union of CBF $h_1(\mathbf{x}) = 8 - (\mathbf{x}_{(0)} + 2)^2 - (\mathbf{x}_{(1)} + 2)^2$ and CBF $h_2(\mathbf{x}) = \mathbf{x}_{(0)} - \mathbf{x}_{(1)}$, the Verif-II should fail. But Verif-I could verifies this union since, starting from any point inside the circle $h_1(\mathbf{x})\geq 0$, we can always use switching strategy I to switch from $h_1(\mathbf{x})$ to $h_2(\mathbf{x})$. Fig \ref{fig:verif-II_invalid} shows the simulation result of this example.


\begin{figure}[h]
\centering
\begin{subfigure}{0.23\textwidth}
    \includegraphics[width=\textwidth]{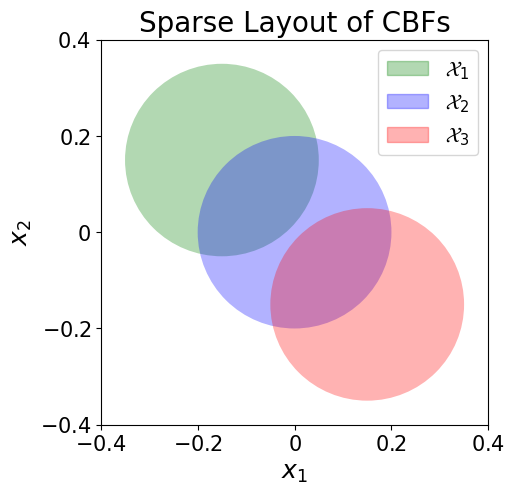}
    \subcaption{}
    \label{fig:sparse_layout}
\end{subfigure}
\begin{subfigure}{0.23\textwidth}
    \includegraphics[width=\textwidth]{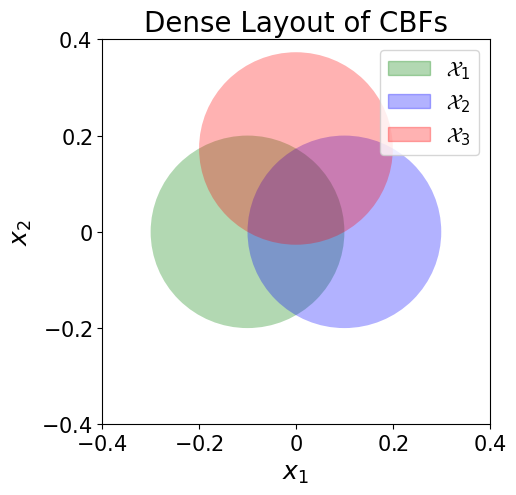}
    \subcaption{}
    \label{fig:dense_layout}
\end{subfigure}
\caption{Union of 3 CBFs with different layouts. Fig. \eqref{fig:sparse_layout} is the spares layout such that $\mathcal{X}_c$ only has 5 subsets $\mathcal{X}_{\mathcal{N}}$. Fig. \eqref{fig:dense_layout} is the dense layout such that $\mathcal{X}_c$ has 7 subsets $\mathcal{X}_{\mathcal{N}}$.}
\label{fig:different_layouts}
\end{figure}

\begin{figure}[h]
\centering
\begin{subfigure}{0.23\textwidth}
    \includegraphics[width=\textwidth]{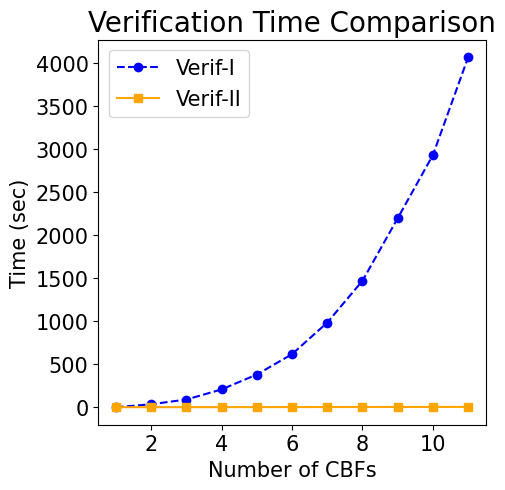}
    \subcaption{}
    \label{fig:computation_time}
\end{subfigure}
\begin{subfigure}{0.22\textwidth}
    \includegraphics[width=\textwidth]{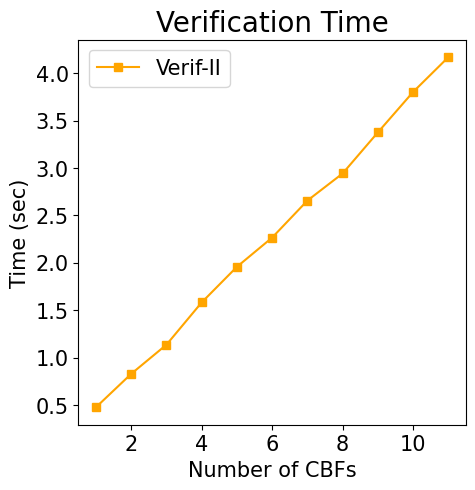}
    \subcaption{}
    \label{fig:computation_time_II}
\end{subfigure}
\caption{Computation time for Verif-I and Verif-II. Fig. \eqref{fig:computation_time} compares the computation time of Verif-I and Verif-II in the sparse case with unions of different number of CBFs. Fig. \eqref{fig:computation_time_II} shows that the computation time of Verif-II only grows linearly as the number of CBFs grows.}
\label{fig:computation_time_comparison}
\end{figure}

\begin{figure}[h]
    \centering
    \includegraphics[width=0.65\linewidth]{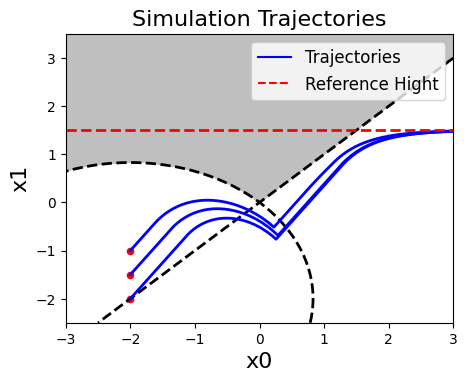}
    \caption{Simulated trajectories of single integrator with constant horizontal velocity. The boundaries of the CBFs are marked as black dashed lines. The CBF-QP uses switching strategy I to switch from circular CBF $h_1(\mathbf{x})$ to linear CBF $h_2(\mathbf{x})$. The closed loop system keeps safe and tracks the reference height.}
    \label{fig:verif-II_invalid}
\end{figure}
\section{Conclusions And Future Work}
\label{sec:conclusion}
This paper focuses on ensuring forward invariance of the union of super level sets of CBFs (union-CBFs). We proposed sufficient conditions for keeping the closed-loop system with switching CBF-QP controller safe. The proposed condition ensures (i) finite switches in any finite time interval and (ii) forward invariance of the closed-loop system. By utilizing the concept of minimal CBF and Slater's condition, our proposed sufficient condition allows less conservative safe control policies while satisfying the point-Lipschitz property. We considered two types of switching strategies and proposed sufficient conditions on CBFs for each of the strategies. We also derived SOS verification frameworks for union-CBFs conditions under each switching strategy. The experiments show that our switching union-CBFs avoids numerical issue brought about by the over constraining. However, the methods proposed in this paper only consider CBFs with relative degree 1, and hence our future work will extend the discussion to high-relative-degree cases.

\bibliographystyle{ieeetr}
\bibliography{ref}
\end{document}